\documentclass[a4paper,american,reqno]{amsart}

\newif\ifPreprint \Preprinttrue
\newif\ifSubmission \Submissionfalse

\usepackage{babel}
\usepackage[utf8]{inputenc}
\usepackage[T1]{fontenc}
\usepackage{booktabs}
\usepackage{xspace}
\usepackage[binary-units=true]{siunitx}
\usepackage{etoolbox}
\robustify\bfseries
\usepackage{tabularx}
\usepackage[nohyperlinks,nolist]{acronym}

\usepackage{placeins}
\usepackage{relsize}
\usepackage{algorithm,algpseudocode}
\usepackage{todonotes}
\usepackage{csquotes}
\algrenewcommand\algorithmicrequire{\textbf{Input:}}
\algrenewcommand\algorithmicensure{\textbf{Output:}}
\makeatletter
\renewcommand{\ALG@name}{Method}
\makeatother
\usepackage{microtype}
\usepackage[style = numeric-comp,
            maxbibnames = 100,
            maxcitenames = 2,
            giveninits = true,
            isbn = false,
            backend = biber]{biblatex}
\usepackage[colorlinks,
            citecolor=blue,
            urlcolor=blue,
            linkcolor=blue]{hyperref} 
\usepackage{orcidlink}
\usepackage{cleveref}

\usepackage{caption}
\crefname{algocf}{method}{methods}
\Crefname{algocf}{Method}{Methods}

\usepackage{thmtools,thm-restate}
\declaretheorem{theorem}

\makeatletter
\patchcmd{\@settitle}{\uppercasenonmath\@title}{\scshape\large}{}{}
\patchcmd{\@setauthors}{\MakeUppercase}{\scshape\normalsize}{}{}
\makeatother

\vfuzz \hfuzz
\newcommand{\abbr}[1][abbrev]{#1.\xspace}

\newcommand{\eg}{\abbr[e.g]}

\newcommand{\ie}{\abbr[i.e]}

\newcommand{\define}{\mathrel{{\mathop:}{=}}}
\newcommand{\enifed}{\mathrel{{=}{\mathop:}}}

\newcommand{\field}{\mathbb}
\newcommand{\naturals}{\field{N}}

\newcommand{\reals}{\field{R}}
\newcommand{\integers}{\field{Z}}

\newcommand{\R}{\reals}

\makeatletter
\newcommand{\fcdot}{\,\cdot\,}
\newcommand{\fcarg}[1]{\def\fc@rg{#1}\ifx\fc@rg\empty\fcdot\else\fc@rg\fi}
\makeatother
\newcommand{\abs}[1]{\lvert\fcarg{#1}\rvert}

\newcommand{\norm}[2][]{\lVert\fcarg{#2}\rVert\ifx#1\empty\else_{#1}\fi}
\newcommand{\Norm}[2][]{\left\lVert#2\right\rVert\ifx#1\empty\else_{#1}\fi}

\newcommand{\eps}{\varepsilon}

\newcommand{\domain}{\mathcal{D}}
\newcommand{\locdomain}{\domain_{\text{loc}}}
\newcommand{\lb}[1]{\underline{#1}}
\newcommand{\ub}[1]{\overline{#1}}

\newcommand{\vectorize}[1]{\mathbf{#1}}
\newcommand{\vv}{\vectorize{v}}
\newcommand{\vd}{\vectorize{d}}

\newtheorem{lemma}[theorem]{Lemma}

\newtheorem{remark}[theorem]{Remark}

\crefname{assumption}{assumption}{assumptions}

\newcommand{\software}[1]{\texttt{#1}}

\bibliography{literature}

\usepackage[foot]{amsaddr}

\begin{document}

\title[MINLP Relaxations: Piecewise Linear vs. Global Parabolic]{Clash of MINLP Relaxations: \\ Piecewise Linear vs. Global Parabolic}
\author[A. G{\"o}{$\beta$}]
{Adrian G{\"o}{$\beta$}\orcidlink{https://orcid.org/0009-0002-7144-8657}}

\address[A. Gö$\beta$]{
	University of Technology Nuremberg,
	Analytics \& Optimization,
	Dr.-Luise-Herzberg-Str.~4,
	90461~Nuremberg,
	Germany}
\email[A. Gö$\beta$]{adrian.goess@utn.de}

\date{\today}

\begin{abstract}
  Solving mixed-integer nonlinear programs (MINLPs) 
typically relies on constructing relaxations that are easier to tackle 
than the original problem. 
Recently, global parabolic (PARA) relaxations were introduced, 
featuring separable quadratic functions -- paraboloids -- 
as global under- or overestimators of general nonlinear constraint functions.
So far, the paraboloids are all computed at once by solving
a mixed-integer linear program (MIP).
For small tolerances or wide function domains, the corresponding MIP grows in
size and is eventually intractable,
preventing a meaningful comparison with established relaxation techniques.

We therefore propose a novel iterative method to compute 
PARA approximations that succeeds on all tolerance-domain
combinations where the original one has failed.
The computational study is preceded by a thorough theoretical explanation
and analysis.
Finally, the improved method enables a computational comparison with piecewise
linear (PWL) relaxations in terms of runtime on general MINLP instances.

The results show that the modern solver SCIP can solve PWL relaxations faster
when the tolerance is high, shifting strongly in favor of PARA for tighter
tolerances.
We attribute the effect to the difference in the corresponding problem size:
PWL relaxations introduce binary variables to identify the active linear piece
and their number grows with decreasing tolerance.
PARA, on the other hand, does not require additional variables such that the
dimension is maintained.
For problems with at least one (co)sine constraint, the effect significantly
amplifies.
Thereby, for medium tolerances, PARA relaxations outperform
SCIP stand-alone.
Applied problems like alternating current optimal power flow (AC-OPF) feature
such constraint types, leaving PARA a viable relaxation strategy.

\end{abstract}

\keywords{Mixed-integer Nonlinear Programming,
Relaxations,
Mixed-Integer Linear Programming,
Mixed-Integer Quadratically-Constrained Programming\hspace{-3pt}
}
\subjclass[2010]{90C11, 
90C20, 
90C26, 
90C30\hspace{-3pt} 
}

\maketitle\
\section{Introduction}
\label{sec:intro}

The modeling power of \acf{minlp} is undisputed.
The combination of integral components and nonlinear interconnection allows for
the resemblance of a great variety of applications.
These range from gas network control~\cite{koch2015gasnetwork} over
chemical processes~\cite{baliban2012go-chemical-processes} up to the
design of water networks~\cite{bragalli2012waternetworkdesign}.
Along with this power comes a certain (computational) cost, which makes it hard
to solve general \ac{minlpps} at scale and necessitates a retreat to
simplified models, at least as an intermediate step.
For instance, in modern optimization software for \ac{minlp} like 
\software{BARON}~\cite{sahinidis1996baron},
\software{COUENNE}~\cite{belotti2009branch-and-bound-minlp}, 
\software{Gurobi}~\cite{gurobi},
\software{SCIP}~\cite{hojny2025scip10}, or
\software{XPRESS}~\cite{xpress2025}
relaxations of the original problem are constructed and then solved.

A popular technique to construct relaxations of problems with general nonlinear
constraints is their piecewise handling.
With the advances in \ac{mip} throughout the last decades,
constructing relaxations based on linear functions appears as a potentially
successful option.
In fact, \ac{pwl} relaxations have been originally introduced
in~\cite{markowitz1957orig-intro-pwl} -- to the best of our knowledge --
and are further developed and adjusted every
since, \eg, see~\cite{beach2024nonconvex-miqcp-to-mip,
burlacu2020mip-for-minlp,
croxton2003pwl-comparison,
huchette2023pwl-new-formulations,
rebennack2015pwl-2d}.
Also refer to~\cite{vielma2015pwl-techniques} for an extensive survey and
to~\cite{braun2023pwl-t-rex} for a recent computational comparison.
As fas as we are aware, all of the aforementioned solvers include such
relaxations.

There also exist piecewise techniques beyond the linear realm.
For instance, the SC-\acs{minlp}
approach~\cite{dambrosio2012sc-minlp,dambrosio2019sc-minlp-next-gen}
can be seen as an extension of \ac{pwl}.
For a nonlinear and twice continuously differentiable function $f$, it divides
the function domain into intervals on which $f$ is either convex or concave,
respectively.
Then, the concave parts are relaxed by \ac{pwl}, whereas the convex parts
remain.
This renders the resulting relaxed problem a convex \acs{minlpp}, which is
considered (more) tractable.
Even more general approaches feature
piecewise quadratic~\cite{song2025pwq} or piecewise
polynomial approximates~\cite{cuesta2025additive-regression-go}.
The resulting problems fall into the class of \ac{miqcp}, at least after
reformulation.
Though \ac{miqcp} is still a subclass of \ac{minlp}, there exist tailored
approaches to solve such problems~\cite{beach2022mip-in-qp,
    beach2024nonconvex-miqcp-to-mip,buchheim2013nonconvex-quadratic-integer,
buchheim2013relax-nonconvex-miqp,elloumi2025miqcp-interval-methods,
misener2013glomiqo,misener2014antigone,wiese2021miqcqp-study}.

All those piecewise approaches have the benefit to construct relaxations
that can fulfill a prespecified tolerance.
That is, given a tolerance $\eps > 0$, one can compute approximates, \ie,
pieces of linear/quadratic functions, that
fulfill the $\eps$-accuracy with respect to the original problem.
Thus any solution to the relaxed problem constitutes an $\eps$-approximate
solution which often suffice in practice.
As an additional advantage, the character of the relaxed problem as a \ac{mip}
or \ac{miqcp} problem allows to leverage tailored solvers.

Naturally, those benefits are not without cost.
Depending on the modeling technique, they come with the introduction of binary
variables whose number grows with the number of pieces.
The approximates -- \eg, linear pieces --  lack a global validity, \ie,
they do not constitute global under-/overestimators of the constraint function,
and thus need to be ``disabled'' outside their corresponding interval.
Hence, the resulting relaxation grows in (variable) size, which may
impose computational challenges for large problems or tight tolerances.
In a first attempt to circumvent additional integer variables,
the authors of~\cite{goess2025para} introduced a framework
to compute \ac{para} approximations of functions.
The approximates are \emph{paraboloids}, \ie, separable quadratic functions,
globally under-/overestimating a constraint function.
Although not necessarily being convex, the global validity allows to incorporate
the paraboloids as constraints without the necessity of additional
binary variables.
That is, the problem size grows in the number of constraints only, but some
solvers like SCIP seem to handle those similar-shaped
constraints quite well, compare~\cite{goess2025para}.

Although the \ac{para} approach seems to be a viable relaxation technique,
the method to compute respective \ac{para} approximations
in~\cite{goess2025para} is computationally limited.
It computes all approximating paraboloids at once by repetitively solving
\ac{mip} problems.
Thereby, their size grows with domain width and decreasing tolerance,
leaving them eventually not solvable under limited resources.
Consequently, \cite{goess2025para} reports valid parabolic approximations only
on artificial domains and with respect to moderate tolerances,
preventing a meaningful comparison with established techniques like \ac{pwl}.

We thus introduce a novel iterative method that leverages differentiability of
the constraint functions in combination with a factorable problem structure
to compute \ac{para} approximations for tight tolerances and wide domains.
In \Cref{sec:algo}, we introduce our method formally, give proof of its
correctness, and showcase that it is indeed able to succeed on tolerance-domain
combinations that are left as unsolved by the approach in~\cite{goess2025para}.
Furthermore, our method then allows for a comparison against \ac{pwl}
relaxations in terms of numbers of approximates -- linear pieces versus
parabolas -- (\Cref{subsubsec:number-para-lins}) and computational
efficacy (\Cref{sec:relaxation-comparison}).

The remainder is dedicated to establish common ground in
\Cref{sec:preliminaries} by a presentation of the notation and the optimization
problem.
Further, it gives a short introduction into \ac{pwl} (\Cref{subsec:pwl-model})
and the original \ac{para} approach (\Cref{subsec:para-model}).
We close the article with some concluding remarks in \Cref{sec:conclusion}.
Proofs and additional results can be found in \Cref{sec:appendix-proofs}
and \Cref{sec:dual-gap-improvements}, respectively.

\section{Preliminaries}
\label{sec:preliminaries}

Before diving into the main topic of the article, we want to clarify notation
 and state the problem(s) in question.
During later statements and proofs this will foster clarity.
Additionally, we aim to explain basic concepts such as the relaxation
techniques.

\subsection{Notation}
Consider the naturals without zero $\naturals = \{1, 2, 3,\dots\}$.
We will use $n',\, n \in \naturals$ as the original variable dimension and the
variable dimension after reformulation, respectively, and
$m', \, m \in \naturals$ as the number of original constraints and
constraints of interest, respectively.
For counting or numbering variables, constraints, or other objects,
we introduce the short notation $[n] \define \{1, 2, \dots, n\}$.
Vectors are given in bold such as $\mathbf{x} \in \reals^n$,
whereas components are italicized, e.g., $\mathbf{x} = (x_1, \dots, x_n)^\top$.
For an interval $\domain = [\lb x, \ub x]$, we denote its length by 
$\abs{\domain} \define \abs{\ub x - \lb x}$,
using $\domain \neq \emptyset$ and $\abs{\domain} > 0$ interchangeably.
The interior is denoted by $\mathrm{int}(\domain) = (\lb x, \ub x)$.

\subsection{Mathematical Problem}
\label{subsec:math-problem}

We consider general \ac{minlp} problems of the form
\begin{equation}
	\begin{aligned}
		\min_{\mathbf{x'} \in \reals^{n'}}\ & c(\mathbf{x'}) \\
		\text{s.t.} \ & g_j(\mathbf{x'}) \leq 0, & j \in [m'], \\
		& x'_i \in \integers, & i \in I,\\
        & \mathbf{x'} \in [\lb{\mathbf{x}}',\ub{\mathbf{x}}'],
	\end{aligned}
	\tag{P$_\text{gen}$}
	\label{eq:problem-general}
\end{equation}
where $I \subseteq [n']$.
The vectors $\lb{\mathbf{x}}'$ and $\ub{\mathbf{x}}'$ denote
component-wise bounds to $\mathbf{x}'$, which we assume to be finite.
Note that this kind of formulation is no restriction, \ie, it does include
 equality constraints by re-modeling $g(\mathbf{x'}) = 0$ to 
 $g(\mathbf{x'}) \leq 0$ and $-g(\mathbf{x'}) \leq 0$.

The involved functions are assumed to be sufficiently smooth,
which is common in \ac{minlp}, see,
\eg,~\cite{dambrosio2020-sc-minlp-disjunctive-cuts}.
That is, we assume them to be once continuously differentiable and, if
necessary, to be able to compute their optimum over any compact domain.
Depending on the particular method the latter can require twice continuous
differentiability.

We also assume the involved functions to be
\emph{factorable}, see, \eg, \cite{belotti2013minlo,mccormick1976factorable}.
That is, they are composed of finitely many binary operations (sum, product,
division, power) and finitely many unary operations ($\sin$, $\cos$, $\exp$,
$\log$, $\abs{\cdot}$) of constants and variables.
Equivalently, factorable functions can be expressed in terms
of an expression tree which has the mentioned operations as nodes
as well as variables and constants as leaves.
For a given non-leaf node $N$, the child node(s) represents the argument(s) of
the operation in $N$.
More detailed explanations can be found
in~\cite{cohen2002elementary-algorithms,belotti2013minlo,
gay2012expressiongraphs,morsi2013diss,schichl2005expressiontrees}.
Nearly all instances from MINLPLib~\cite{bussieck2003minlplib} are available in
the \software{OSiL} file format~\cite{fourer2010osil},
which directly provides the expression tree structure.

The factorability allows to rewrite the original problem formulation in terms
of bivariate products and one-dimensional functions by introduction of
auxiliary variables in place of intermediate nodes.
For example, consider $g(\mathbf{x'}) = \sin(x'_1x'_2)^2$.
By introduction of variables $\tilde{x}$ and $y$ we can rewrite the constraint
$g(\mathbf{x'}) \leq 0$ to $y^2 \leq 0$, $y = \sin(\tilde{x})$, and
$\tilde{x} = x'_1x'_2$.
Note that finite bounds on $\mathbf{x'}$ can be propagated to $\tilde{x}$
and~$y$.
For instance, $x'_1 \in [1, 2]$ and $x'_2 \in [0, \pi/4]$ result in
$\tilde{x} \in [0, \pi/2]$ and $y \in [0, 1]$.

Those reformulations, as performed by the modern \ac{minlp} solvers cited in the
introduction, allow us to focus on one-dimensional, general nonlinear
constraint functions only.
For this purpose, we denote them after reformulation as $f_j$,
\eg, $f_1(\tilde{x}) = \sin(\tilde{x})$,
and collect all linear and quadratic constraints (including bounds) as well as
integrality restrictions in a set $\Omega$.
Furthermore, vector $\mathbf{x}$ represents a concatenation of $\mathbf{x'}-$
and newly introduced $\mathbf{\tilde{x}}$-variables,
whereas the $\mathbf{y}$-variables are kept for clarity.
In conclusion, we assume our problem at hand to be presented as
\begin{equation}
	\begin{aligned}
		\min_{(\mathbf{x}, \mathbf{y}) \in \reals^n \times \reals^m} \ 
        & c(\mathbf{x}) \\
		\text{s.t.} \ & f_j(x_{i_j}) \leq y_j, & j \in [m], \\
		& (\mathbf{x}, \mathbf{y}) \in \Omega.
	\end{aligned}
	\tag{P}
	\label{eq:problem-exact}
\end{equation}
The number of $\mathbf{x}$-variables is now denoted by $n$,
whereas $m$ is the number of ``interesting'' constraints and
$\mathbf{y}$-variables.
The subindex $j$ of $i_j$ indicates that a variable $x_i$ may occur in
several constraints $j$.

Note that the constraint functions~$f_j$ are only defined on a one-dimensional
and finite domain, but remain general nonlinear ones.
Therefore, despite any reformulation, it cannot be expected to compute an exact
solution to~\labelcref{eq:problem-exact}.
Instead, in practice the user specifies an approximation tolerance $\eps > 0$
and is satisfied when obtaining an $\eps$-approximate solution, \ie,
a point that violates the nonlinear constraints by a value of at most $\eps$.
Such a point is equivalent to an exact optimal solution of the problem
\begin{equation}
	\begin{aligned}
		\min \ & c(\mathbf{x}) \\
		\text{s.t.} \ & f_j(x_{i_j}) \leq y_j + \eps, & j \in [m], \\
		& (\mathbf{x}, \mathbf{y}) \in \Omega.
	\end{aligned}
	\tag{P$_\eps$}
	\label{eq:problem-approx}
\end{equation}

For computing an $\eps$-approximate solution,
a natural idea is to substitute $f_j$ in~\labelcref{eq:problem-exact} by an
$\eps$-relaxation $\hat{f}_j$, \ie, it holds that $f_j - \hat{f}_j \leq \eps$
on the domain of $f_j$.
The resulting relaxed problem reads
\begin{equation}
    \begin{aligned}
        \min \ & c(\mathbf{x}) \\
        \text{s.t.} \ & \hat{f}_j(x_{i_j}) \leq y_j, & j \in [m], \\
        & (\mathbf{x}, \mathbf{y}) \in \Omega.
    \end{aligned}
    \tag{P$_\text{rel}$}
    \label{eq:problem-relax}
\end{equation}
A solution $(\mathbf{x}^\star, \mathbf{y}^\star)$
for~\labelcref{eq:problem-relax} then fulfills $f_j(x_{i_j}^\star) \leq
\hat{f}_j(x_{i_j}^\star) + \eps \leq y_j^\star + \eps$ for $j \in [m]$ and is
thus a valid $\eps$-approximate solution.

A strategy for computing a linear $\hat{f}_j$, which has reattained recent
attention, is the approach by piecewise linear functions.
In the following, we provide an introduction of it.

\subsection{Piecewise Linear Model}
\label{subsec:pwl-model}

We consider a nonlinear function $f$ and a finite
domain $\domain = [\lb x, \ub x]$ such that $\lb x < \ub x$.
First, we will explain the computation and modeling of a \acf{pwl}
\emph{approximation}, before extending it to a \ac{pwl} \emph{relaxation}.
A \ac{pwl} approximation $\bar{f}$ of $f$ can be constructed as a linear
interpolation of $f$ and is thus defined by breakpoints
$\{t_0, t_1, \dots, t_K\}$ with $t_0 = \lb x$, $t_K = \ub x$,
and $t_{k-1} < t_k$ for $k \in [K]$.
Parameter~$K \in \naturals$ denotes the number of intervals/linear pieces.
Then, for $x \in [t_{k-1}, t_k]$, it is
\begin{equation*} 
    \bar{f}(x) = f(t_{k-1}) \frac{t_k - x}{t_k - t_{k-1}} + 
                        f(t_k) \frac{x - t_{k-1}}{t_k - t_{k-1}}.
\end{equation*}

Given an approximation tolerance $\eps > 0$, we describe one way to
compute~$\bar{f}$.
Starting with $K = 1$, $t_0 = \lb x$, $t_1 = \ub x$, we check the condition
$\abs{\bar{f}(x) - f(x)} \leq \eps$ for all $x \in [t_0, t_1]$.
Since such a check can be performed by computing the maximum/minimum of the
difference, the aforementioned sufficient differentiability comes into play.
If the condition is met, $\bar{f}$ is a valid $\eps$-approximation of $f$.
Otherwise, we execute a binary search for the largest $t_1$ to fulfill the
condition.
After $t_1$ is determined, we increment~$K$ and start to find the
largest~$t_2$ such that the condition is met on $[t_1, t_2]$.
The procedure is iterated and stops when $t_K = \ub x$.
We refer to~\cite{braun2023pwl-t-rex} for a current implementation.

Besides its computation, the more important aspect is the incorporation of a
\ac{pwl} approximation $\bar{f}$ into an optimization model.
To the best of our knowledge, a \ac{pwl} formulation was first
introduced in~\cite{markowitz1957orig-intro-pwl}.
The method explained there is referred to as \emph{incremental method}
or \emph{$\delta$-method}.
In the meantime, there have been established several adjusted
formulations, see~\cite{beach2024nonconvex-miqcp-to-mip,
    burlacu2020mip-for-minlp,
    croxton2003pwl-comparison,
    huchette2023pwl-new-formulations,
    rebennack2015pwl-2d},
that try to improve computational performance, \eg, by reducing the number of
variables in the model.
Recently though, \citeauthor{braun2023pwl-t-rex}~\cite{braun2023pwl-t-rex}
demonstrated that in some settings the incremental method is still superior to
all other methods in terms of computational performance.
Therefore, we choose it for our computational comparison later on and explain it
in the following, rephrasing mainly from~\cite{braun2023pwl-t-rex}.

For a given $x \in \domain$, we first need to determine the corresponding
interval index $\bar{k} \in [K]$ such that $x \in [t_{\bar{k}-1}, t_{\bar{k}}]$.
In the incremental method, we introduce one binary variable~$u_k$ 
for each but the last interval $[t_{k-1}, t_k]$, $k \in [K - 1]$, 
and aim to have $u_k = 1$ for all intervals with $t_k \leq x$, else $u_k = 0$.
In other terms, the binary variable~$u_k$ is activated whenever $x$ exceeds
the upper bound of the corresponding $k$th interval until the interval
containing~$x$ is reached.
Formally, this can be ensured by
\begin{align*}
    & t_0 + \sum_{k = 1}^{K - 1} u_{k} (t_k - t_{k -1}) \leq x,\\
    & u_{k+1} \leq u_k, \qquad \text{for } k \in [K-1].
\end{align*}
Besides determining the relevant interval $[t_{\bar{k} - 1}, t_{\bar{k}}]$,
we second need to compute the relative distance of $x$ inside it.
Here, the incremental method advocates to introduce 
continuous variables $\delta_k \in [0, 1]$ corresponding to the intervals,
$k \in [K]$.
As for the binary variables $u_k$, the continuous ones $\delta_k$ are also
set to one when $x$ exceeds~$t_k$, which is
enforced by the inequalities
\begin{equation}
    \label{eq:pwl-increment}
    \delta_{k+1} \leq u_k, \qquad \text{for } k \in [K-1].
\end{equation}
The continuous variables even allow to model the value of $x$ explicitly as
\begin{equation}
    \label{eq:pwl-x-value}
    x = t_0 + \sum_{k = 1}^{K} \delta_k (t_k - t_{k-1}),
\end{equation}
whereas for the interpolation value $w$ of $f$, we get
\begin{equation}
    \label{eq:pwl-w-value}
    w = f(t_0) + \sum_{k = 1}^K \delta_k(f(t_k) - f(t_{k-1})).
\end{equation}
In conclusion, the incremental method poses a model for \ac{pwl}
approximation of the form
\begin{equation}
\begin{aligned}
    \labelcref{eq:pwl-increment}, &\labelcref{eq:pwl-x-value}, 
    \labelcref{eq:pwl-w-value}, \\
    \delta_1 &\leq 1,\\
    u_k &\leq \delta_k, & k \in [K-1],\\
    \delta_K &\geq 0,\\
    u_k &\in \{0, 1\}, & k \in [K-1],
\end{aligned}
\label{eq:pwl-approx}
\end{equation}
where the edge cases are respected as well.
Note that the binary variables are necessary to fix the $\delta$-variables
to their respective bounds such that only the relevant one is potentially
fractional.

Since this main ingredient of ``filling'' or ``incrementation'' is often modeled
via variables named $\delta$, the method is also known by this respective name.

Note that system~\labelcref{eq:pwl-approx} models a \ac{pwl} 
\emph{approximation} of $f$ such that $\abs{w - f} \leq \eps$
on~$\domain$.
Thus, a simply replacement of each $f_j$ in~\labelcref{eq:problem-exact}
with a corresponding $w_j$ and~\labelcref{eq:pwl-approx} does in general not 
pose a relaxation~\labelcref{eq:problem-relax},
but an approximated problem.
Consequently, if the approximated problem is solved, its solution
is not guaranteed to be also a solution of~\labelcref{eq:problem-approx}.
However, to retrieve a relaxation, we can define $\eps' \define \eps/2$,
compute a \ac{pwl} approximation with respect to $\eps'$,
and shift the result for $\eps/2$.
In particular, the \ac{pwl} approximation then ensures that 
$\abs{w - f(x)} \leq \eps'$ for all $x \in \domain$.
This implies that $w - \eps' \leq f(x)$ and thus $\abs{(w - \eps') - f(x)} \leq
\abs{w - f(x)} + \eps' \leq 2\eps' = \eps$.
In other terms, a shift of the function values $f(t_k)$
in~\labelcref{eq:pwl-w-value} by $\eps'$ is a shift in $w$ and
results in a underestimation -- thus relaxation
like~\labelcref{eq:problem-relax} -- satisfying a tolerance of~$\eps$.
Such a procedure for obtaining a \ac{pwl} relaxation is performed throughout
the present article and we refer to
this relaxation technique simply by \ac{pwl}.

\subsection{Global One-sided Parabolic Model}
\label{subsec:para-model}

The recently introduced approach of \acf{paralong} approximations and
relaxations~\cite{goess2025para} aims to circumvent the use of binary variables
as in \ac{pwl}.
For a given constraint function $f$, it computes an approximation $\breve{f}$
as the point-wise maximum of paraboloids, \ie, separable univariate quadratic
functions, that underestimate $f$ on~$\domain$ individually and satisfy the
given approximation tolerance.
As we consider one dimensional constraint functions~$f$ only, we refer to this
type of approximates as parabolas.
Formally, for a set of $K \in \naturals$ parabolas $\{p_k\}_{k\in [K]}$,
the approximation is $\breve{f}(x) = \max_{k \in [K]} p_k(x)$ such that
\begin{equation}
    \label{eq:main-properties}
    f - \eps \leq \breve{f} \leq f \qquad \text{ on } \domain.
\end{equation}
In other terms, $\breve{f}$ represents an epigraph approximation of $f$.
Due to the global underestimation, we can simply replace each constraint
in~\labelcref{eq:problem-exact} by a corresponding parabolic approximation
and end up with~\labelcref{eq:problem-relax}.
Taking the presentation as `$\leq$'-constraints and the pointwise maximum into
account, it even simplifies to the set of constraints
$p_k(x) \leq y$, $k \in [K]$.
This makes the modeling of \ac{para} relaxations straightforward and does
not rely on the use of binary variables.

It remains to clarify the computation of $p_k$ in~\cite{goess2025para}.
For a constraint function~$f$ and its domain $\domain = [\lb x, \ub x]$, the
approach requires a global Lipschitz constant $L > 0$ of $f$.
This can be considered a weaker assumption in comparison to continuous
differentiability, since maximizing $\abs{f'}$ on $\domain$ gives~$L$.
Based on the magnitude of~$L$, the approach in~\cite{goess2025para}
performs a binary search on the number of parabolas $K$.
During one search iteration, $K$ is fixed and a \ac{mip} problem is constructed
in which the coefficients of the parabolas are considered as variables.
The problem's constraints ensure the $\eps$-approximation property of a
solution, whereas the objective controls the global underestimation, \ie,
the first and second inequality in~\labelcref{eq:main-properties}, respectively.
If the \ac{mip} problem is solved in a given time limit with objective of zero,
the corresponding parabolas fulfill~\labelcref{eq:main-properties} and the
current $K$ is decreased.
Otherwise, the number of parabolas $K$ is increased.

The weak assumptions necessary for the approach represent a clear advantage,
as it leaves it applicable for a really broad class of constraint functions.
However, relying on the solution of several \ac{mip} problems can be
computationally challenging and results in function-domain-$\eps$ combinations
that are not computable, see the comparison below
in~\Cref{subsec:comp-results-para} for more details.

In fact, the article~\cite{goess2025para} introduces a theoretically valid
as well as a more practical option of this approach.
The latter circumvents some of the challenges by relaxing particular constraints
of the \ac{mip} and by incorporating manual checks.
Those changes turn it into a computationally more efficient version.
Hence, we conduct the later comparison in~\Cref{subsec:comp-results-para}
with respect to this version and refer to it as \texttt{practical-MIP},
following the original source.

\section{Computing Parabolas Fast}
\label{sec:algo}

Inspired by the computation of \ac{pwl} approximations, we introduce a novel
method to compute \ac{para} approximations for general nonlinear functions $f$
in an iterative manner.
We recall that $f$ is assumed to be sufficiently continuous differentiable and
defined on a domain $\domain = [\lb x, \ub x]$.
An explanation of the method and respective proofs of correctness are provided
in~\Cref{subsec:method}.
We refer this procedure as the \ac{para} approximation method or
shortly~\ac{para}.
It  is indeed able to compute parabolas for cases where the original
method from~\cite{goess2025para} has failed,
see~\Cref{subsec:comp-results-para}.
The computational performance even allows to conclude the present section
with a comparison of number of approximates, \ie, parabolas and linear pieces,
on general nonlinear functions that occur frequently in the
MINLPLib~\cite{bussieck2003minlplib}.

\subsection{Method}
\label{subsec:method}

In contrast to the method in~\cite{goess2025para}, we associate one
parabola $p_k$, ${k \in [K]}$, with a certain domain 
${\domain_k \define [t_{k-1}, t_k]}$, on which it fulfills the
$\eps$-approximation, \ie,
\begin{equation}
	\label{eq:eps-approx}
	f(x) - \eps \leq p_k(x), \qquad \text{for all } x \in \domain_k.
\end{equation}
To construct a global $\eps$-approximation with all parabolas,
they must satisfy the first inequality in~\labelcref{eq:main-properties}
and we thus require
\begin{equation}
    \label{eq:domain-coverage}
	\bigcup_{k \in [K]} \domain_k = \domain.
\end{equation}
Simultaneously, as described above, each $p_k$ is supposed to serve as a
global underestimator of $f$, \ie,
\begin{equation}
	\label{eq:global-under}
	p_k(x) \leq f(x), \qquad \text{for all } x \in \domain.
\end{equation}
This settles the description of the requirements.

Our new approach returns the parabolas ``from left to right'',
\ie, it computes a parabola which satisfies property~\labelcref{eq:eps-approx}
on $[\lb x, t_1] = [t_0, t_1]$, and then proceeds to $[t_1, t_2]$ until it reaches the
upper bound $\ub x$ with the final interval $[t_{K-1}, t_K] = [t_{K-1}, \ub x]$. 
Here, $K \geq 0$ indicates the necessary number of parabolas, which is
determined a-posteriori.
Such a procedure can be leveraged for computing \ac{pwl} approximations
as well, see~\Cref{subsec:pwl-model}.

Let's turn to a particular iteration $k$ and investigate the
challenge at hand.
The current lower bound is fixed at $t_{k-1} < \ub x$.
To cover a large interval, on which $p_k$ is a valid $\eps$-approximation,
we aim to maximize the next upper bound~$t_k \leq \ub x$.
Enforcing~\labelcref{eq:eps-approx} and~\labelcref{eq:global-under},
we can formulate this as the semi-infinite optimization problem
\begin{subequations}
\begin{align}
		\max_{t_k,\, \tilde{a},\, \tilde{b},\, \tilde{c}} \ & t_k \notag\\
		\text{s.t.} \ & \tilde{a}x^2 + \tilde{b}x + \tilde{c} \geq f(x) - \eps,
        && \text{for all } x \in [t_{k-1}, t_k] = \domain_k,
        \label{eq:k-th-problem-eps} \\
        & \tilde{a}x^2 + \tilde{b}x + \tilde{c} \leq f(x), && 
        \text{for all } x \in \domain,\label{eq:k-th-problem-under}
\end{align}
\label{eq:k-th-problem}
\end{subequations}
where we define $t_{0} = \lb x$ for $\domain_1$ and
$(\tilde{a}, \tilde{b}, \tilde{c})$ denote variables for the coefficients
of~$p_k$.
A solution $(a_k, b_k, c_k)$ to this problem then gives 
$p_k(x) \define a_kx^2 + b_kx + c_k$, satisfying~\labelcref{eq:eps-approx}
and~\labelcref{eq:global-under} by~\labelcref{eq:k-th-problem-eps}
and~\labelcref{eq:k-th-problem-under}, respectively.
However, the constraints~\labelcref{eq:k-th-problem}
are infinitely many and thus require further treatment.

In a first attempt let us assume to have a fixed parabola $p_k$
and a fixed upper bound $t_k$.
Then, computing 
\begin{equation}
	\label{eq:sub-global-function-bound}
	\max_{x \in \domain} p_k(x) - f(x) = 
    \max_{x \in \domain} a_kx^2 + b_kx + c_k - f(x),
\end{equation}
and
\begin{equation}
	\label{eq:sub-local-approx-bound}
	\max_{x \in \domain_k} f(x) - p_k(x) = 
    \max_{x \in \domain_k} f(x) - (a_kx^2 + b_kx + c_k ),
\end{equation}
can be considered computational tractable, since both problems require to
maximize differences of continuously differentiable functions on a
closed interval.
This can be conducted by means of Newton-type methods, for instance.

Let $z_k$ and $z^\eps_k$ denote the solutions
for~\labelcref{eq:sub-global-function-bound} and
\labelcref{eq:sub-local-approx-bound}, respectively.
Then, \labelcref{eq:k-th-problem-eps} is fulfilled if and only if
$f(z_k^\eps) - p_k(z_k^\eps) \leq \eps$, and, analogously,
\labelcref{eq:k-th-problem-under} is fulfilled if and only if
$p_k(z_k) - f(z_k) \leq 0$.
In the case that at least one of these inequalities does not hold, a decrease of
the interval length $\abs{\domain_k}$, thus a shift of $t_k$ towards $t_{k-1}$,
and a new computation of $p_k$ may resolve the issue.

Based on this observation, we propose a two-step or \emph{inner-outer}
approach.
Let $k$ be the current iteration index, then
\begin{itemize}
	\item an outer loop tries to compute a maximal upper bound $t_k$ for
	the current interval $[t_{k-1}, t_k]$, but iteratively reduces the value 
	of $t_k$ if the inner loop fails, shifting $t_k$ in the direction
    of~$t_{k-1}$, and
	\item an inner loop aims to find a feasible set of coefficients
    $(a_k, b_k, c_k)$, defining the current parabola $p_k$ such 
    that~\labelcref{eq:k-th-problem} hold true.
\end{itemize}

\subsubsection{Inner Loop}
\label{subsec:inner-loop}

We start our explanation with the inner loop.
For ease of notation, we drop the iteration index $k$ and thus write the current
(local) domain $\domain_k = \locdomain = [\lb t, \ub t]$ as well as
$p_k(x) = p(x) = ax^2 + bx + c$.

Throughout the inner loop, we force $p$ to cross $f - \eps$ at the bounds
of $\locdomain$.
In particular, this can be expressed as
	\begin{align*}
		p(\lb t) = f(\lb t) - \eps,\\
		p(\ub t) = f(\ub t) - \eps,
	\end{align*}
which reads in explicit form as
\begin{subequations}
	\label{eq:b-c-computation-explicit}
	\begin{align}
		a{\lb t}^2 + b\lb t + c = f(\lb t) - \eps, 
        \label{eq:b-c-computation-explicit-1}\\
		a{\ub t}^2 + b\ub t + c = f(\ub t) - \eps.
        \label{eq:b-c-computation-explicit-2}
	\end{align}
\end{subequations}
Neglecting~\labelcref{eq:k-th-problem-under} for the moment,
the intuition behind this measure is as follows:
If the parabola $p$ fulfills~\labelcref{eq:k-th-problem-eps} and
simultaneously $p(\lb t) > f(\lb t) - \eps$ or $p(\ub t) > f(\ub t) - \eps$,
the domain $\locdomain$ could have been chosen larger.

Fixing $p$ at the bounds of $\locdomain$ reduces to original degrees of
freedom when determining the three coefficients of $p$ by two.
In other words, given one of the coefficients $a$, $b$, or $c$, 
this system uniquely determines the remaining two. 
To see this, assume coefficient $a$ as fixed and rewrite the
system~\labelcref{eq:b-c-computation-explicit} to $A \vv = \vd$, where
\begin{equation*}
	A = \begin{pmatrix}
		\lb t & 1 \\
		\ub t & 1
	\end{pmatrix},
	\qquad	
	\vd = \begin{pmatrix} 
		f(\lb t) - \eps - a{\lb t}^2 \\
		f(\ub t) - \eps - a{\ub t}^2
	\end{pmatrix},
\end{equation*}
and $\vv = \begin{pmatrix} b & c \end{pmatrix}^\top$.
When $\locdomain \neq \emptyset$, so $\lb t < \ub t$, 
it holds $\det(A) = \lb t - \ub t < 0$ and the system always 
attains a unique solution.

Instead of fixing coefficient $a$ to determine $(b, c)$, we 
combine~\labelcref{eq:b-c-computation-explicit} 
with~\labelcref{eq:k-th-problem} and can derive
bounds to the quadratic coefficient $a$.
This will allow to determine the existence of a feasible set of coefficients
$(a, b, c)$ and constitute the main ingredient of the inner loop.
In the following, the derivation of bounds is formalized.

\begin{lemma}[Bounds on $a$]
	\label{lem:bounds-a}
	Let $f : \domain \to \R$ be  as above and
    $\mathrm{int}(\domain) \neq \emptyset$.
	Further, let $\eps > 0$ and 
	$\locdomain \define [\lb t, \ub t] \subset \domain$ such that
    $\lb x < \lb t < \ub t < \ub x$, 
	implying $\mathrm{int}(\locdomain) \neq \emptyset$.
	For a parabola $p(x) = ax^2 + bx + c$, 
    assume~\labelcref{eq:b-c-computation-explicit}.
	Then the following implications hold true.
	\begin{itemize}
		\item[(a)] Consider $\widehat{x} \in \domain\setminus\locdomain$.
		Then $p(\widehat x) \leq f(\widehat x)$ if and only if
		\begin{equation*}
			a \leq \frac{f(\widehat x) - f(\lb t) + \eps}
            {(\widehat x - \lb t)(\widehat x - \ub t)} - 
            \frac{f(\ub t) - f(\lb t)}{(\ub t - \lb t) ( \widehat x - \ub t)}
            \enifed A(\widehat{x}).
		\end{equation*}
		\item[(b)] Consider $\widehat{x} \in \mathrm{int}(\locdomain)$. 
		Then $p(\widehat x) \leq f(\widehat x)$ if and only if
		\begin{equation*}
			a \geq \frac{f(\widehat x) - f(\lb t) + \eps}
            {(\widehat x - \lb t)(\widehat x - \ub t)} - 
            \frac{f(\ub t) - f(\lb t)}{(\ub t - \lb t) ( \widehat x - \ub t)}
            \enifed A(\widehat{x}).
		\end{equation*}
		\item[(c)] Consider $\widehat{x} \in \mathrm{int}(\locdomain)$. 
		Then $p(\widehat x) \geq f(\widehat x) - \eps$ if and only if
		\begin{equation*}
			a \leq \frac{f(\widehat x) - f(\lb t)}
            {(\widehat x - \lb t)(\widehat x - \ub t)} - 
            \frac{f(\ub t) - f(\lb t)}{(\ub t - \lb t) ( \widehat x - \ub t)}
            \enifed B(\widehat{x}).
		\end{equation*}
	\end{itemize}
\end{lemma}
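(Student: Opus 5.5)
The plan is to parametrize $p$ by $a$ alone via~\labelcref{eq:b-c-computation-explicit}. Write $q(x) \define f(\lb t) - \eps + \frac{f(\ub t) - f(\lb t)}{\ub t - \lb t}(x - \lb t)$ for the secant line of $f - \eps$ through $\lb t$ and $\ub t$. Then $q(\lb t) = f(\lb t) - \eps$ and $q(\ub t) = f(\ub t) - \eps$. Any parabola $p(x) = ax^2 + bx + c$ that satisfies~\labelcref{eq:b-c-computation-explicit} differs from $q$ by a quadratic vanishing at $\lb t$ and $\ub t$ with leading coefficient $a$. Hence
\begin{equation*}
  p(x) = q(x) + a(x - \lb t)(x - \ub t) \qquad \text{for all } x \in \R .
\end{equation*}
This identity holds because the system has a unique solution $(b,c)$ for fixed $a$, as argued before the lemma. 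I would state it first and verify it by evaluating at the two nodes and comparing leading coefficients.

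Next, each inequality in (a)--(c) becomes a linear inequality in $a$. For (a) and (b), $p(\widehat x) \le f(\widehat x)$ is equivalent to
\begin{equation*}
  a(\widehat x - \lb t)(\widehat x - \ub t) \le f(\widehat x) - q(\widehat x).
\end{equation*}
For (c), $p(\widehat x) \ge f(\widehat x) - \eps$ is equivalent to
\begin{equation*}
  a(\widehat x - \lb t)(\widehat x - \ub t) \ge f(\widehat x) - \eps - q(\widehat x).
\end{equation*}
The sign of $(\widehat x - \lb t)(\widehat x - \ub t)$ decides the direction after dividing. It is strictly positive for $\widehat x \in \domain \setminus \locdomain$, since there $\widehat x < \lb t$ or $\widehat x > \ub t$. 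It is strictly negative for $\widehat x \in \mathrm{int}(\locdomain)$. Dividing therefore gives $\le$ in (a), while in (b) and (c) the inequality flips, giving $\ge$ and $\le$ respectively. Since division by a nonzero number is reversible, each step is an equivalence, which yields the ``if and only if''.

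Finally, I would check that the quotients match the stated expressions. The difference $f(\widehat x) - q(\widehat x)$ equals $f(\widehat x) - f(\lb t) + \eps - \frac{f(\ub t)-f(\lb t)}{\ub t - \lb t}(\widehat x - \lb t)$. Dividing by $(\widehat x - \lb t)(\widehat x - \ub t)$ cancels the factor $(\widehat x - \lb t)$ in the second term, which gives exactly $A(\widehat x)$. In (c) the extra $-\eps$ cancels the $+\eps$ in the first numerator, which gives $B(\widehat x)$. This bookkeeping is the only place errors can creep in, so the main care goes to it and to the sign analysis. Neither step is a genuine obstacle.
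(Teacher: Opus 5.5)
Your proof is correct and follows essentially the paper's route. The paper eliminates $c$ by subtracting the first node equation and substitutes $b$ from the difference of the two node equations. It arrives at exactly your inequality $a(\widehat x-\lb t)(\widehat x-\ub t)\le f(\widehat x)-q(\widehat x)$, with your secant line $q$, and then applies the same sign-based division. Your decomposition $p = q + a(x-\lb t)(x-\ub t)$ is a cleaner packaging of that elimination, and it coincides with the parameterized parabola the paper itself uses later in the proof of the inner-loop theorem.
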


\begin{proof}
	As a preparatory step, we subtract~\labelcref{eq:b-c-computation-explicit-1}
	from~\labelcref{eq:b-c-computation-explicit-2} and get
	\begin{equation*}
		a(\ub{t}^2 - \lb{t}^2) + b(\ub{t} - \lb{t}) = f(\ub{t}) - f(\lb{t}),
	\end{equation*}
	which is equivalent to 
	\begin{equation}
		a(\ub{t} + \lb{t}) + b = \frac{f(\ub{t}) - f(\lb{t})}{\ub{t} - \lb t}
        \label{eq:cond-diff}
	\end{equation}
	by division with $(\ub{t} - \lb{t})$.
	This preliminary work will support the proof for every of the three cases.
	
	\paragraph{Case (a)}
	Starting off, we write the condition from (a) explicitly, \ie,
	\begin{equation*}
		a\widehat{x}^2 + b\widehat{x} + c \leq f(\widehat{x}).
	\end{equation*}
	By subtracting~\labelcref{eq:b-c-computation-explicit-1}, this is equivalent
    to
	\begin{equation*}
		a(\widehat{x}^2 - \lb{t}^2) + b(\widehat{x} - \lb{t}) 
        \leq f(\widehat{x}) - f(\lb{t}) + \eps.
	\end{equation*}
	Now, rearranging~\labelcref{eq:cond-diff} for $b$ and 
	plugging the result into the former gives
	\begin{align*}
		& a(\widehat{x}^2 - \lb{t}^2) + 
        \left(\frac{f(\ub{t}) - f(\lb{t})}{\ub{t} - \lb t} - 
        a(\ub{t} + \lb{t}) \right) (\widehat{x} - \lb{t}) 
        \leq f(\widehat{x}) - f(\lb{t}) + \eps \\
		\iff & a\left(\widehat{x}^2 - \lb{t}^2 - \ub{t}\widehat{x} - 
        \lb{t}\widehat{x} + \ub{t}\lb{t} + \lb{t}^2\right)  
        \leq f(\widehat{x}) - f(\lb{t}) + \eps - 
        \frac{f(\ub{t}) - f(\lb{t})}{\ub{t} - \lb t}(\widehat{x} - \lb{t}) \\
		\iff & a(\widehat{x} - \ub{t})(\widehat{x} - \lb{t}) 
        \leq f(\widehat{x}) - f(\lb{t}) + \eps - 
        \frac{f(\ub{t}) - f(\lb{t})}{\ub{t} - \lb t}(\widehat{x} - \lb{t}).
	\end{align*}
	As $\widehat{x} \in \domain\setminus\locdomain$, either 
    $\widehat{x} < \lb t < \ub t$ or $\lb t < \ub t < \widehat{x}$.
	That is, $(\widehat{x} - \ub{t})(\widehat{x} - \lb{t}) > 0$ and thus 
    division by this term does not change the direction of the inequality
    but leads to~(a).
	
	\paragraph{Case (b)}
	This is analogous to case (a), however, 
	the last division changes the sign of the inequality, 
	as $\widehat{x} \in \mathrm{int}(\locdomain)$ and 
    thus $(\widehat{x} - \ub{t})(\widehat{x} - \lb{t}) < 0$.
	
	\paragraph{Case (c)}
	Again, we write the condition from (c) explicitly and get
	\begin{equation*}
		a\widehat{x}^2 + b\widehat{x} + c \geq f(\widehat{x}) - \eps.
	\end{equation*}
	Subtraction of~\labelcref{eq:b-c-computation-explicit-1} gives
	\begin{equation*}
		a (\widehat{x}^2 - \lb{t}^2) + b(\widehat{x} - \lb{t})
        \geq f(\widehat{x})  - f(\lb{t}).
	\end{equation*}
	Following the strategy from previous cases and respecting that
    $\widehat{x} \in \mathrm{int}(\locdomain)$ gives the desired inequality.
\end{proof}

\Cref{lem:bounds-a} implies that every point in $\domain\setminus\locdomain$
implicitly gives an upper bound to the quadratic coefficient $a$ and every point
inside $\locdomain$ even implies lower and upper bounds to $a$.
Setting $\lb a \gets \sup_{x \in \mathrm{int}(\locdomain)} A(x)$ and
${\ub a \gets \min\left(\inf_{x \in \domain\setminus\locdomain} A(x),
\inf_{x \in \locdomain}B(x)\right)}$, we ensure the existence of a
feasible coefficient $a$ (thus parabola) if and only if $\lb a \leq \ub a$.
A feasible $a$ can then be completed computing $(b, c)$ by
solving the linear system~\labelcref{eq:b-c-computation-explicit}.
In turn, if we know that $\lb a > \ub a$, there exists no feasible $a$ and
we need to shrink~$\locdomain$ in the outer loop.

The computation of the minima and maxima above to determine $\lb{a}$ and
$\ub{a}$ is non-trivial, since the variable $x$ appears in the denominator of a
fraction, especially in a term that converges to infinity.
Hence, we compute these bounds iteratively, starting with some initial
values~$\lb{a}_0$ and~$\ub{a}_0$.
In fact, as any $a \in [\lb a, \ub a]$ is valid, we initialize $\lb{a}_0$ with
negative infinity and always set $a$ to be the current upper bound.
This will facilitate the presentation of the method and showing its correctness.
However, it remains to find a value for $\ub{a}_0$, which can also be
derived by the results of~\Cref{lem:bounds-a} as mentioned below.

\begin{remark}
	\label{rem:initial-a-conditions}
	Consider a point $x \in \mathrm{int}(\locdomain)$ and
    \Cref{lem:bounds-a}(c).
	If $x \to \lb t$, the inequality bounding coefficient $a$ turns into
	\begin{equation*}
		a \leq \frac{f'(\lb t)}{\lb t - \ub t} + 
        \frac{f(\ub t) - f(\lb t)}{(\ub t - \lb t)^2} 
        = (f(\ub t) - f(\lb t) + (\lb t - \ub t)f'(\lb t))/(\ub t - \lb t)^2.
	\end{equation*}
    This directly gives an upper bound to $a$, which can be used for $\ub{a}_0$.
    
    For the case $x \to \ub t$, we first need to rewrite $B(x)$.
    By respecting that
    \begin{equation*}
       D \define 
       \frac{1}{(x - \lb t)(x - \ub t)} - \frac{1}{(\ub t - \lb t)(x - \ub t)}
        = - \frac{1}{(\ub t - \lb t)(x - \lb t)},
    \end{equation*}
    we can derive that
    \begin{align*}
        B(x) &
        = \frac{f(x) - f(\lb t)-f(\ub t) + f(\ub t)} {(x - \lb t)(x - \ub t)} -
        \frac{f(\ub t) - f(\lb t)}{(\ub t - \lb t) (x - \ub t)} \\
        & =  \frac{f(x) - f(\ub t)}{(x - \lb t)(x - \ub t)}
        + (f(\ub t) - f(\lb t)) D\\
        & = \frac{f(x) - f(\ub t)}{(x - \lb t)(x - \ub t)} -
        \frac{f(\ub t) - f(\lb t)}{(\ub y - \lb t)(x - \lb t)}.
    \end{align*}
    Then, considering $x \to \ub t$ for $x \in \mathrm{int}(\locdomain)$,
    \Cref{lem:bounds-a}(c) provides that
	\begin{equation*}
		a \leq (f(\lb t) - f(\ub t)+(\ub t - \lb t)f'(\ub t))/(\ub t - \lb t)^2.
	\end{equation*}
    Taking the minimum of both those upper bounds, leads to $\ub{a}_0$
    for our inner loop.
\end{remark}

After initializing the quadratic coefficient $a$, the remaining
coefficients~$(b, c)$ can be computed by solving
\labelcref{eq:b-c-computation-explicit}.
Then, the inner loop tests for~\labelcref{eq:eps-approx}
and~\labelcref{eq:global-under} by 
computing~\labelcref{eq:sub-global-function-bound}
on $\domain\setminus\locdomain$ and $\locdomain$ separately,
as well as~\labelcref{eq:sub-local-approx-bound} on $\locdomain$.
Explicitly, for a fixed parabola $p$, this is
\begin{subequations}
	\label{eq:inner-loop-maxima}
	\begin{align}
		\max_{x \in \domain\setminus\locdomain} p(x) - f(x), 
        \label{eq:inner-loop-maximum-outside}\\
		\max_{x \in \mathrm{int}(\locdomain)} p(x) - f(x),
        \label{eq:inner-loop-maximum-in-under}\\
		\max_{x \in \mathrm{int}(\locdomain)} f(x) - p(x) - \eps.
        \label{eq:inner-loop-maximum-in-eps}
	\end{align}
\end{subequations}
We denote the solutions by $x^1,\, x^2,\, x^3$ and their values by
$v_1,\, v_2, \, v_3$, respectively.
If $v_{\max} \define \max\{v_1, v_2, v_3\}$ exceeds the threshold of zero,
the current choice of coefficient $a$ is infeasible and
$x^1,\, x^2,\,x^3$  are considered as candidates for ``$\widehat{x}$''
in~\Cref{lem:bounds-a}, implying an update of the current
bounds~$\lb{a}_l$ and $\ub{a}_l$ on $a$.

The process is iterated until the bounds are contradictory, 
signaling the absence of a feasible triple $(a, b, c)$, or until the 
constraints~\labelcref{eq:k-th-problem} are satisfied.
This process is summarized in Method~\labelcref{alg:subroutine}.
Arrows are always pointing towards the object that is returned by this step.

\begin{algorithm}[h]
	\caption{Inner loop -- Computing $(a, b, c)$.}\label{alg:subroutine}
	\begin{algorithmic}[1]
		\Require (Local) Interval bounds $\lb t$ and $\ub t$ such that 
        $\lb t < \ub t$.
		\Ensure Coefficients $(a, b, c)$ defining $p$
        satisfying~\labelcref{eq:k-th-problem} or ``Infeasible''.
  		\State Initialize $\ub{a}_0$ according to
          \Cref{rem:initial-a-conditions}, $\lb{a}_0 \gets -\infty$,
          and $l \gets 0$.
		\State Set $a_0 \gets \ub{a}_l$ and compute $(b_l, c_l)$ according
        to~\labelcref{eq:b-c-computation-explicit}.
		\While{$\lb{a}_l \leq \ub{a}_l$}
			\State Compute~\labelcref{eq:inner-loop-maxima} 
            $\to x^1,\, x^2,\, x^3,\, v_{\max}$.
			\If{$v_{\max} \leq 0$}
				\State \Return $(a_l, b_l, c_l)$.
                \label{alg:subroutine:step:return}
			\EndIf
			\State Update $\lb{a}_{l+1}$ and $\ub{a}_{l+1}$ with $x^1,\, x^2,\, x^3$
            according to~\Cref{lem:bounds-a}.
            \State Increment $l \gets l+1$.
            \State Set $a_l \gets \ub{a}_l$ and compute $(b_l, c_l)$ according
            to~\labelcref{eq:b-c-computation-explicit}.
		\EndWhile
		\State \Return ``Infeasible''.
	\end{algorithmic}
\end{algorithm}

After its presentation, we continue by formally showing
the correct mode of operation of the inner loop.

\begin{restatable}[Correctness of Method~\labelcref{alg:subroutine}]{theorem}
    {thmone}
    \label{thm:correctness-inner-loop}
	If there exists a solution $(a, b, c)$
	fulfilling~\labelcref{eq:k-th-problem}
	and~\labelcref{eq:b-c-computation-explicit},
    Method~\labelcref{alg:subroutine} returns one or converges to it.
	Otherwise, it returns that there is no such solution.
\end{restatable}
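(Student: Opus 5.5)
The proof rests on one monotonicity invariant. Let $F \subseteq \R$ denote the set of quadratic coefficients $a$ for which the triple $(a,b,c)$, with $(b,c)$ obtained from~\labelcref{eq:b-c-computation-explicit}, satisfies~\labelcref{eq:k-th-problem}. \Cref{lem:bounds-a} turns every pointwise constraint at a point $\widehat x$ into a half-line condition on $a$: upper bounds $A(\widehat x)$ for $\widehat x \in \domain\setminus\locdomain$ and $B(\widehat x)$ for $\widehat x \in \mathrm{int}(\locdomain)$, and lower bounds $A(\widehat x)$ for $\widehat x \in \mathrm{int}(\locdomain)$. Hence $F$ is an intersection of half-lines, i.e.\ an interval $[\lb a, \ub a]$. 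The endpoints $\lb t, \ub t$ themselves are covered by~\labelcref{eq:b-c-computation-explicit}, since there $p = f - \eps \le f$.

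\textbf{Invariant.} I first show by induction on $l$ that $F \subseteq [\lb a_l, \ub a_l]$.
\begin{itemize}
\item For $l = 0$, $\lb a_0 = -\infty$ is trivially a valid lower bound. The bound $\ub a_0$ from \Cref{rem:initial-a-conditions} is the limit of the bounds $B(x)$ as $x \to \lb t$ and $x \to \ub t$, and a limit of valid upper bounds is still valid.
\item In each iteration, every update uses a point $x^i$ and the corresponding implication of \Cref{lem:bounds-a}, which is necessary for membership in $F$. Intersecting keeps $F$ inside the new interval.
\end{itemize}

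\textbf{Termination by infeasibility.} If the loop exits because $\lb a_l > \ub a_l$, the invariant gives $F = \emptyset$, so ``Infeasible'' is correct.

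\textbf{Termination by return.} If $v_{\max} \le 0$, then the three maxima~\labelcref{eq:inner-loop-maxima} certify~\labelcref{eq:k-th-problem-eps} and~\labelcref{eq:k-th-problem-under} on all of $\domain$. So the returned triple is feasible.

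\textbf{Behaviour when $F \ne \emptyset$.} Here the loop never exits through the infeasible branch, so it either returns a feasible triple or runs forever. I must show that in the latter case $a_l = \ub a_l$ converges to a point of $F$. The key steps are:
\begin{enumerate}
\item \emph{Strict decrease.} In a non-returning step, $v_{\max} > 0$. The current $a_l = \ub a_l$ violates the constraint at the maximizer $x^i$, so by the ``iff'' in \Cref{lem:bounds-a} the new bound at $x^i$ excludes $a_l$. If the violated condition gives an upper bound (cases (a), (c)), then $\ub a_{l+1} < a_l$. If the violated one is the lower-bound case (b), then $\lb a_{l+1} > a_l = \ub a_l$, which would contradict $F \neq \emptyset$ by the invariant. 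So only upper-bound violations occur.
\item \emph{Convergence.} The sequence $\ub a_l$ is strictly decreasing and bounded below by $\max F$, hence converges to some $a^\ast \ge \max F$.
\item \emph{Identifying the limit.} Show $a^\ast \in F$, which forces $a^\ast = \max F$. The coefficients $(b,c)$ depend affinely on $a$, and the maxima $v_i$ depend continuously on $a$ on the compact pieces. Suppose some constraint at a point $x'$ were violated at $a^\ast$ by a margin $\delta > 0$. By continuity, the maximizer at step $l$ would, for $l$ large, give violation at least about $\delta$. By the explicit form of \Cref{lem:bounds-a}, that violation translates into a decrease of $\ub a$ by at least $\delta / \abs{(x^i - \lb t)(x^i - \ub t)}$. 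This is bounded below whenever the maximizer stays away from $\lb t, \ub t$, contradicting convergence.
\end{enumerate}

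\textbf{Main obstacle.} The delicate case is when the maximizers accumulate at $\lb t$ or $\ub t$, where the denominators vanish and $A, B$ blow up. I would handle it with the limit forms of \Cref{rem:initial-a-conditions}. Near the endpoints, the $\eps$-constraint~\labelcref{eq:inner-loop-maximum-in-eps} is controlled by the derivative bounds already built into $\ub a_0$, and $p - f \approx -\eps < 0$ there, so violations near the endpoints cannot persist with a fixed margin. This gives the required positive decrease.
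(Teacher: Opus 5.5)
Your feasible case follows the paper's route. You write $p(x;a)=a\,q(x)+r(x)$ with $q(x)=(x-\lb t)(x-\ub t)$ and $r$ independent of $a$, note that $a_l$ is non-increasing, and show that a violation with a fixed margin forces a fixed decrease of $a_l$, contradicting a finite lower bound. Identifying the limit as $\max F$ is a small bonus.

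The genuine gap is the second half of the statement. The theorem says that if no solution exists, the method \emph{returns} ``Infeasible''. You only prove the converse, namely that an ``Infeasible'' output is correct. Nothing in your proposal rules out the while-loop running forever when $F=\emptyset$. Your convergence step cannot be reused as is, because it relies on the lower bound $\max F$.

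The missing argument goes as follows. Suppose $F=\emptyset$ and the loop never exits. Then $v_2\le 0$ in every iteration, because your own Step~1 shows that a violation of type \Cref{lem:bounds-a}(b) gives $\lb a_{l+1}\ge A(x^2)>a_l\ge\ub a_{l+1}$ and ends the loop. Hence $a_l$ strictly decreases. If $a_l\to-\infty$, then $p(x;a_l)\to+\infty$ at every $x\in\mathrm{int}(\locdomain)$ because $q(x)<0$ there, so eventually $v_2>0$, a contradiction. If $a_l\to a^\ast>-\infty$, your Step~3 argument shows that the conditions of \Cref{lem:bounds-a}(a) and (c) hold at $a^\ast$. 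Moreover, $p\le f$ on $\mathrm{int}(\locdomain)$ holds at $a^\ast$, since $p(x;a_l)\uparrow p(x;a^\ast)$ there and $p(x;a_l)\le f(x)$ for all $l$. Thus $a^\ast\in F$, again a contradiction. The paper's last paragraph addresses this case, albeit sketchily.

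Separately, your ``main obstacle'' does not exist. At a violating maximizer $x^i$ of type (a) or (c), the ``iff'' in \Cref{lem:bounds-a} gives $a_l-\ub a_{l+1}\ge v_i/\abs{q(x^i)}$. On the bounded domain, $\abs{q}\le Q\define\max_{x\in\domain}\abs{q(x)}<\infty$, so $a_l-a_{l+1}\ge \max\{v_1,v_3\}/Q$ uniformly in $l$. A maximizer approaching $\lb t$ or $\ub t$ makes $\abs{q(x^i)}$ small and the decrease \emph{larger}. Keeping maximizers away from the endpoints bounds $\abs{q(x^i)}$ from below, which is the wrong direction for the estimate you need. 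The paper uses exactly this uniform bound through $q_{\max}$ and $q_{\min}$, so you can drop the derivative-based endpoint analysis. Note also that $B$ has the finite limits of \Cref{rem:initial-a-conditions} at the endpoints; only $A$ blows up there.
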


The following provides a proof sketch, for which the details can be found in
\Cref{subsec:proof-inner-loop}.

\begin{proof}
    By leveraging \Cref{lem:bounds-a} the existence of finite bounds
    $\lb a \leq \ub a$ is equivalent to the existence of a solution $(a, b, c)$.
    This represents the main ingredient.
    
    If there exists a solution, the condition of the while-loop is always met.
    Since $a_l$ in iteration $l$ is chosen to be the current $\ub{a}_l$, 
    also $a_l \geq \lb a$ for all $l$.
    By \Cref{lem:bounds-a}, it thus always holds that $p \leq f$
    on~$\mathrm{int}(\locdomain)$.
    The remaining cases, $p \leq f$ on~$\domain\setminus\locdomain$ and
    $p \geq f - \eps$ on~$\mathrm{int}(\locdomain)$, are proven following the
    same strategy:
    We use~\labelcref{eq:b-c-computation-explicit} to write down
    a parameterized version of parabola $p$ in the quadratic coefficient $a$.
    This allows us to show convergence by contradiction.
\end{proof}

\subsubsection{Outer Loop}
\label{subsec:outer-loop}

While the inner loop computes an approximating parabola for a given interval
or asserts with its impossibility, it remains to consider the interval bounds.
We propose to start with the global interval $[t_0, t_1] \define [\lb x, \ub x]$
and consecutively shift $t_1$ toward $t_0$, until a suitable parabola
can be computed by the inner loop.
After storing the parabola and the associated domain $[t_0, t_1]$,
the remaining interval $[t_1, \ub x]$ is considered next.
This is iterated until $K$ parabolas~$p_k$ with their associated 
domains~$\domain_k$ are computed, covering the entire global domain~$\domain$,
\ie, fulfilling~\labelcref{eq:domain-coverage}.
This procedure is presented in pseudocode in Method~\labelcref{alg:main}.

\begin{algorithm}[h]
	\caption{Outer loop -- Computing $\domain_k$}\label{alg:main}
	\begin{algorithmic}[1]
		\Require (Global) Domain $\domain = [\lb x, \ub x] \neq \emptyset$,
        approximation tolerance $\eps > 0$.
        \Ensure A set of coefficients $(a_k, b_k, c_k)$ defining $p_k$,
        $k \in [K]$, $K \in \naturals$,
        such that~\labelcref{eq:main-properties}.
		\State Initialize $k \gets 1$, $t_0 \gets \lb x$, $t_1 \gets \ub x$.
		\While{$t_{k-1} < \ub x$}
			\State Call Method~\labelcref{alg:subroutine} on $[t_{k-1},t_k]$
            $\to (a, b, c)$ or ``Infeasible''.
			\If{``Infeasible''}
				\State Shift $t_k$ towards $t_{k-1}$.\label{alg:main:step:shift}
			\Else
				\State Store $(a_k, b_k, c_k)$.
				\State Set $t_{k+1} \gets \ub x$ and increment $k \gets k + 1$.
			\EndIf
		\EndWhile
		\State \Return Parameters $(a_k, b_k, c_k)$ for $k \in [k]$.
	\end{algorithmic}
\end{algorithm}

Depending on the type of shift applied in Step~\labelcref{alg:main:step:shift},
the number of parabolas and thus iterations varies.
We want to briefly discuss this effect by supposing a shift
in form of a convex combination, \ie,
$t_k \gets \lambda t_{k-1} + (1 - \lambda) t_k$ for $\lambda \in (0, 1)$.
If $\lambda$ is near 1, the shift is strong towards $t_{k-1}$ and 
the length of the current interval~$\domain_k$ is decreased rapidly.
In consequence, it is likely to find a suitable parabola
with~\labelcref{eq:eps-approx} and~\labelcref{eq:global-under}
after only a couple of ``Infeasible''-iterations of the inner loop.
However, it takes a considerable number $K$ of intervals -- thus parabolas --
to cover the entire domain, \ie, to reach~\labelcref{eq:domain-coverage}.
In turn, a value of $\lambda$ close to 0 results in the opposite effect,
potentially computing a small number of parabolas -- thus few quadratic
constraints for the \ac{para} relaxation -- but for a certain
computational cost.

The specific number of calls of the inner loop depends on the function~$f$
and its domain~$\domain$ to be considered, as well as on the value of $\lambda$.
From the theoretical side, however, it remains to show that
Method~\labelcref{alg:main} converges \emph{at all}.
In the following, we will demonstrate it,
supposing the shift in Step~\labelcref{alg:main:step:shift} to be a true
decrease of the interval length of $\domain_k$ that still ensures
$\abs{\domain_k} > 0$.
In particular, we show that for small enough domains~$\domain_k$,
there exists a respective solution $p_k$.
This allows to derive by \Cref{thm:correctness-inner-loop} that the inner loop
at least converges to such a solution.
Considering numerical precision and thus finiteness of the inner loop,
we can conclude that Method~\labelcref{alg:main} terminates.

Note that we will only leverage Lipschitz continuity of our function $f$ for
this proof.
As noted in~\Cref{subsec:para-model}, a global Lipschitz constant $L$ can be
computed as (an upper bound to) $\max\{ \abs{f'(x)} \mid x \in \domain\}$,
\ie, the assumptions of the theorem apply to the given setup.
Keeping the assumptions on~$f$ low, however, allows for wider application
and a comparison of the proof techniques to the ones in~\cite{goess2025para}.

\begin{restatable}[Correctness of Method~\labelcref{alg:main}]{theorem}{thmtwo}
    \label{thm:correctness-outer-loop}
	Let $\domain \neq \emptyset$, $f: \domain \to \reals$ be globally Lipschitz
    continuous with $L > 0$ and $\eps > 0$.
    Then, for any interval length $\Delta > 0$ with
    $\Delta \leq \eps/(3L)$ and $\abs{\domain}/\Delta \in \naturals$,
    one can choose $\domain_k$ as a partition of $\domain$ with
    $\abs{\domain_k} = \Delta$ and by setting $a_k = -4L/\Delta$
    there exist parabolas
    $p_k(x) = a_kx^2 + b_k + c_k$, $k \in [\abs{\domain}/\Delta]$,
    fulfilling the properties~\labelcref{eq:eps-approx} 
    and~\labelcref{eq:global-under}.
\end{restatable}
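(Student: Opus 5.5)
The plan is an explicit construction: on each cell of the partition I place a concave parabola centred at the cell's midpoint and check \labelcref{eq:global-under} and \labelcref{eq:eps-approx} using only the Lipschitz bound. Since $\abs{\domain}/\Delta \in \naturals$, I set $t_k \define \lb x + k\Delta$ for $k = 0, \dots, K \define \abs{\domain}/\Delta$. This gives $\domain_k = [t_{k-1}, t_k]$ with $\abs{\domain_k} = \Delta$, and \labelcref{eq:domain-coverage} holds. Let $m_k \define (t_{k-1}+t_k)/2$ be the midpoint. I define
\begin{equation*}
  p_k(x) \define f(m_k) - s - \frac{4L}{\Delta}(x - m_k)^2 ,
\end{equation*}
with a shift $s \geq 0$ to be fixed. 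Expanding shows $a_k = -4L/\Delta$ as required, with $b_k = 8Lm_k/\Delta$ and $c_k = f(m_k) - s - 4Lm_k^2/\Delta$.

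\emph{Global underestimation \labelcref{eq:global-under}.} Write $r \define \abs{x - m_k} \geq 0$ for $x \in \domain$. Lipschitz continuity gives $f(x) \geq f(m_k) - Lr$. It therefore suffices that $-s - \frac{4L}{\Delta} r^2 \leq -Lr$ for all $r \geq 0$, i.e.\ $s \geq \max_{r \geq 0} \bigl(Lr - \frac{4L}{\Delta} r^2\bigr)$. This concave quadratic in $r$ is maximised at $r = \Delta/8$ with value $L\Delta/16$. So I choose $s \define L\Delta/16$, and then $p_k \leq f$ on all of $\domain$, independent of $r$. This is the step where the sign of $a_k$ matters: the strongly negative curvature beats the linear Lipschitz decrease far from $m_k$.

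\emph{Local $\eps$-approximation \labelcref{eq:eps-approx}.} For $x \in \domain_k$ we have $r \leq \Delta/2$. Hence $f(x) \leq f(m_k) + L\Delta/2$ and
\begin{equation*}
  p_k(x) \geq f(m_k) - \frac{L\Delta}{16} - \frac{4L}{\Delta}\cdot\frac{\Delta^2}{4} = f(m_k) - \frac{17}{16}L\Delta .
\end{equation*}
Thus $f(x) - p_k(x) \leq \bigl(\tfrac12 + \tfrac{17}{16}\bigr)L\Delta = \tfrac{25}{16}L\Delta \leq \tfrac{25}{48}\eps < \eps$, using $\Delta \leq \eps/(3L)$.

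I expect no real obstacle. The only delicate point is balancing the shift $s$ against the curvature: it must be large enough for global validity, which is the maximisation over $r$, yet small enough that the drop $\abs{a_k}\Delta^2/4 = L\Delta$ at the cell boundary plus $s$ plus the Lipschitz variation stays below $\eps$. The constants $4L/\Delta$ and $\eps/(3L)$ in the statement leave comfortable slack for this. Finally, because existence is shown for every cell of length $\Delta$, I would remark that any shift rule in Step~\labelcref{alg:main:step:shift} that truly shrinks $\abs{\domain_k}$ eventually reaches intervals where a solution exists. \Cref{thm:correctness-inner-loop} then applies, giving termination of Method~\labelcref{alg:main}.
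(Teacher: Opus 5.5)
Your argument is correct and proves the theorem as stated, but by a different route than the paper. The paper keeps $a_k=-4L/\Delta$ and takes the parabola fixed by the endpoint conditions \labelcref{eq:b-c-computation-explicit}, i.e.\ the one through $f-\eps$ at $\lb t$ and $\ub t$, written as \labelcref{eq:parameterized-parabola}. It then verifies three cases:
\begin{itemize}
\item $p\geq f-\eps$ on $\locdomain$, by showing $a_k\leq B(x)$ and applying \Cref{lem:bounds-a}(c);
\item $p\leq f$ on $\locdomain$, via the Lipschitz bound $p(x)\leq f(x)+3L\Delta-\eps$, which is where $\Delta\leq\eps/(3L)$ is used;
\item $p\leq f$ off $\locdomain$, via $p'(\ub t)\leq -3L$, $p'(\lb t)\geq 3L$ and concavity, compared against $\abs{f'}\leq L$.
\end{itemize}
Your parabola, centered at the cell midpoint with shift $s=L\Delta/16$, replaces this case analysis by two one-line Lipschitz estimates. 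It never uses $f'$ (the paper's outside case does, although only Lipschitz continuity is assumed). It also leaves slack: $\tfrac{25}{16}L\Delta\leq\eps$ would already suffice.

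What the paper's choice buys is that its parabola lies in the one-parameter family that Method~\labelcref{alg:subroutine} searches. This matters because \Cref{thm:correctness-inner-loop} only guarantees success when some solution satisfies \labelcref{eq:k-th-problem} \emph{together with} \labelcref{eq:b-c-computation-explicit}. Your $p_k$ does not interpolate $f-\eps$ at the cell endpoints; for instance $p_k(t_k)=f(m_k)-\tfrac{17}{16}L\Delta$.

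Nor can it simply be corrected. With $a_k$ fixed, the interpolating parabola differs from yours by a linear function that is nonpositive at both endpoints, hence on the cell, but may be positive outside it. So neither \labelcref{eq:eps-approx} nor the underestimation outside the cell is inherited. Consequently your closing remark that \Cref{thm:correctness-inner-loop} ``then applies'' does not follow from your construction. To get termination of Method~\labelcref{alg:main}, you would have to verify the interpolating parabola directly, which is exactly what the paper's proof does.
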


The following provides a proof sketch, since the full proof is of technical
nature.
Details can be found in \Cref{subsec:proof-outer-loop}.

\begin{proof}
    For a given local interval $\locdomain$ with length $\Delta$, we show
    the existence of a parabola $p$ such that
    \begin{itemize}
        \item[a)] $p(x) \geq f(x) - \eps$ on $\locdomain$,
        \item[b)] $p(x) \leq f(x)$ on $\locdomain$, and
        \item[c)] $p(x) \leq f(x)$ on $\domain\setminus\locdomain$.
    \end{itemize}
    For case a), we make use of~\Cref{lem:bounds-a} and show that
    $\tilde{a} \leq B(x)$ for all $x \in \mathrm{int}(\locdomain)$.
    Cases b) and c) are straightforward, using the Lipschitz continuity
    and the assumptions to~$\Delta$.
    Denoting the mid point of $\locdomain$ as $\tilde{t}$,
    all three cases distinguish $x \leq \tilde{t}$ and $x \geq \tilde{t}$.

\end{proof}

\subsection{Computational Results}
\label{subsec:comp-results-para}

We aim to complement the explanation of \ac{para} in the previous 
section with a performance evaluation compared to the existing method
and a quantitative comparison to \ac{pwl}.
Our implementation is conducted in \software{Python 3.11.8} and can be found
here: \url{https://github.com/adriangoess/pwl-vs-para}.
For the shift in Method~\labelcref{alg:main} 
Step~\labelcref{alg:main:step:shift},
we use a convex combination $t_k \gets (1 - \lambda) t_{k-1} + \lambda t_k$,
where $\lambda = 0.9$.

\subsubsection{Comparison to Existing Method}

To the best of our knowledge, the only method achieving the same
type of \ac{para} approximation is presented in~\cite{goess2025para} and
has been described in~\Cref{subsec:para-model}.
Following the original source, we refer to the method as \texttt{practical-MIP}.
The method is leveraged to compute
\ac{para} approximations on different domains and varying tolerance
for different types of functions, including the sine and the exponential
function.
For a successful run, the resulting number of parabolas is reported.
However, there also exist cases for which the
\texttt{practical-MIP} approach was not able to generate a result.

We compare the number of (un)successful runs
and the number of resulting parabolas.
For $\sin$ and $\exp$, we consider the same intervals and tolerances as
in~\cite{goess2025para}, which are summarized in~\Cref{tab:para-approx-domains}.

\begin{table}
    \centering
    \begin{tabular}{lcccccc}
        \toprule
        & \multicolumn{6}{c}{domain $\domain$} \\ \cline{2-7}
        \\[-1em]
        $\exp$ &$[-5, -2]$ & $[-2,\, 2]$ & $[\, -5,\ \ 2]$& $[\phantom{-}2, 5]$& $[-2, 5]$ & $[-5, \phantom{\pi}5]$  \\ 
        $\sin$  & $[-\frac{\pi}{2}, \ \frac{\pi}{2}]$ & $[\,\frac{\pi}{2}, \frac{3\pi}{2}]$ &$[-\frac{\pi}{2}, \frac{3\pi}{2}]$ & $[\phantom{-}0, \pi]$ & $[\pi,\, 2\pi]$ & $[\phantom{-}0, 2\pi]$ \\
        \bottomrule
    \end{tabular}
    \caption{Function-domain combinations to approximate 
        with Method~\labelcref{alg:main} for comparison 
        with~\cite{goess2025para}.}
    \label{tab:para-approx-domains}
\end{table}

For the comparison, we leverage a similar type of table as used
in~\cite{goess2025para}.
The results regarding the sine function are then
summarized in~\Cref{tab:comparison-number-paras-sin}.
We denote the number of parabolas computed by \texttt{practical-MIP} 
-- if any -- in brackets and highlight the smaller such number in bold.

First of all, for the cases where either method is successful,
we observe mixed results.
There is no consistently superior method in terms of less parabolas, but in some
cases  Method~\labelcref{alg:main} finds a significantly lower number.
For a potential explanation of such a phenomenon, we have to take into account
that \texttt{practical-MIP} relies on a binary search in this quantity.
That is, after every run that reports the \ac{MIP} problem to be infeasible or
hits the time limit, the search procedure doubles the number of parabolas
and restarts.
Hence, it may return high intermediate numbers as they are the best known
when terminating after a fixed number of tries.

For the other cases, we see that Method~\labelcref{alg:main} is able to return
an approximating set of parabolas in all cases, for which \texttt{practical-MIP}
has failed, especially refer to the columns with a tolerance of $10^{-3}$.
The generality of the setup in \texttt{practical-MIP} allows for wide
application, but the underlying \ac{mip} problem requires a number of (binary)
variables growing linearly in the
tolerance and the global Lipschitz constant.
The resulting \ac{mip} problems may not be solved in a considerable
time limit, as specified in~\cite{goess2025para}.
Method~\labelcref{alg:main}, in contrast, can make use of
more restrictive properties like continuous differentiability
(when computing the maxima in~\labelcref{eq:inner-loop-maxima})
to return approximations on larger domains.
Surely, a tighter tolerance requires more parabolas and thus more computations.
However, the respective effort seems to be manageable.

\begin{table}
    \centering
      \begin{tabular}{ccccccccccc}\toprule
	& & \multicolumn{9}{c}{$\eps$} \\ \cline{3-11}
	& & \multicolumn{9}{c}{\vspace{-0.28cm}} \\
$\domain$ & & $10^{0}$ & $10^{-1}$ & $10^{-2}$ & $10^{-3}$  & & $10^{0}$ & $10^{-1}$ & $10^{-2}$ & $10^{-3}$ \\ \cline{3-6}\cline{8-11}
    \\[-1em] 
	$[-\pi/2, \phantom{3}\pi/2]$ &  & 1 (1) & 3 (3) & \textbf{7} (9) & \textbf{22} &  & 1 (1) & 3 (3) & \textbf{7} (8) & \textbf{22} \\ 
    \\[-1em] 
	$[\phantom{-}\pi/2, 3\pi/2]$ &  & 1 (1) & 3 (3) & \textbf{7} (8) & \textbf{22} &  & 1 (1) & 3 (3) & \textbf{7} (8) & \textbf{22} \\ 
    \\[-1em] 
	$[-\pi/2, 3\pi/2]$ &  & 2 (\textbf{1}) & 5 (5) & \textbf{14} (47) & \textbf{44} &  & 1 (1) & 5 (\textbf{3}) & 17 (\textbf{13}) & \textbf{51} \\ 
    \\[-1em] 
	$[0, \phantom{2}\pi]$ &  & 1 (1) & 2 (\textbf{1}) & 5 (5) & \textbf{16} (40) &  & 1 (1) & 1 (1) & 5 (\textbf{3}) & 16 (16) \\ 
    \\[-1em] 
	$[\pi, 2\pi]$ &  & 1 (1) & 1 (1) & 5 (\textbf{3}) & \textbf{16} (32) &  & 1 (1) & 2 (\textbf{1}) & 5 (5) & \textbf{16} \\ 
    \\[-1em] 
	$[0, 2\pi]$ &  & 2 (2) & \textbf{4} (5) & \textbf{14} (24) & \textbf{44} &  & 2 (2) & \textbf{4} (5) & \textbf{14} (44) & \textbf{44} \\ 
    \\[-1em] 
    & & \multicolumn{4}{c}{above} & & \multicolumn{4}{c}{below}\\\bottomrule
  \end{tabular}
	\caption{Number of parabolas to approximate $\sin$ with Method~\labelcref{alg:main} (\texttt{practical-MIP}).}

    \label{tab:comparison-number-paras-sin}
\end{table}

For the exponential function, both effects are observed in even greater extent,
compare~\Cref{tab:comparison-number-paras-exp}.
For cases, where both methods are successful, Method~\labelcref{alg:main}
returns at most as much approximating parabolas as \texttt{practical-MIP}.
That is, it outperforms \texttt{practical-MIP} on every domain-tolerance
combination.
In the other cases, \texttt{practical-MIP} struggles on domains with a large 
upper bound, since this implies a large global Lipschitz constant
as explained in~\cite{goess2025para}.
The obstacle seems to be circumvented with Method~\labelcref{alg:main}.
Again, this results from avoiding to solve several (large) \ac{mip} problems,
but instead only computing maxima on simple intervals.

\begin{table}
	\centering
	  \begin{tabular}{ccccccccccc}\toprule
	& & \multicolumn{9}{c}{$\eps$} \\ \cline{3-11}
	& & \multicolumn{9}{c}{\vspace{-0.28cm}} \\
$\domain$ & & $10^{0}$ & $10^{-1}$ & $10^{-2}$ & $10^{-3}$  & & $10^{0}$ & $10^{-1}$ & $10^{-2}$ & $10^{-3}$ \\ \cline{3-6}\cline{8-11}
    \\[-1em] 
	$[-5, -2]$ &  & 1 (1) & 1 (1) & 2 (2) & 5 (5) &  & 1 (1) & 1 (1) & 2 (2) & \textbf{5} (8) \\ 
    \\[-1em] 
	$[-2, \phantom{-}2]$ &  & 2 (2) & 5 (5) & \textbf{15} (32) & \textbf{47} &  & 2 (2) & 4 (4) & \textbf{13} (20) & \textbf{39} \\ 
    \\[-1em] 
	$[-5, \phantom{-}2]$ &  & 3 (3) & \textbf{7} (8) & \textbf{23} (81) & \textbf{70} &  & 2 (2) & 6 (6) & \textbf{16} (56) & \textbf{51} \\ 
    \\[-1em] 
	$[\phantom{-}2, \phantom{-}5]$ &  & \textbf{6} & \textbf{16} & \textbf{50} & \textbf{158} &  & 5 (5) & \textbf{14} (20) & \textbf{44} & \textbf{137} \\ 
    \\[-1em] 
	$[-2, \phantom{-}5]$ &  & \textbf{10} & \textbf{31} & \textbf{99} & \textbf{313} &  & \textbf{8} & \textbf{23} & \textbf{72} & \textbf{225} \\ 
    \\[-1em] 
	$[-5, \phantom{-}5]$ &  & \textbf{13} & \textbf{39} & \textbf{122} & \textbf{382} &  & \textbf{9} & \textbf{26} & \textbf{79} & \textbf{251} \\ 
    \\[-1em] 
    & & \multicolumn{4}{c}{above} & & \multicolumn{4}{c}{below}\\\bottomrule
  \end{tabular}
	\caption{Number of parabolas to approximate $\exp$ with Method~\labelcref{alg:main} (\texttt{practical-MIP}).}

	\label{tab:comparison-number-paras-exp}
\end{table}

\subsubsection{Number of Parabolas and Linear Pieces}
\label{subsubsec:number-para-lins}

Besides a computational comparison of the \ac{para} and the
\ac{pwl} relaxation technique on \ac{minlp} problems in terms of run time,
one might also wonder about the
number of approximates -- parabolas or linear pieces.
In a relaxation each such requires an additional constraint
and, in case of \ac{pwl}, additional variables.
We aim to answer the question computationally,
considering a fixed tolerance of $\eps=0.1$ and varying domains for fixed
functions.
The computation of the linear pieces follows the explanation
in~\Cref{subsec:pwl-model} and is performed by slightly modifying the
code provided in~\cite{braun2023pwl-t-rex}.
As explained, it is run with a tolerance of $\eps/2$ and the returned
approximation is then shifted by $\eps/2$ to receive a true relaxation.
For computing the parabolas, we use Method~\labelcref{alg:main}.

At first, we take $f = \sin$ into account and investigate the number of
approximates on $[0, \ell\pi]$ for $\ell \in \{1, 2, 3\}$.
For \ac{para}, we get 1, 6, 8, parabolas, respectively, which is
visualized in~\Cref{fig:para-approx-sin-varying-domain}.
For \ac{pwl}, we end up with 4, 8, 12 linear pieces as shown 
in~\Cref{fig:pwl-approx-sin-varying-domain}.
Such a linear growth of the number of pieces is to be expected
when considering a periodic function like $\sin$.

\begin{figure}
    \centering
    \includegraphics[height=3.3cm,keepaspectratio]{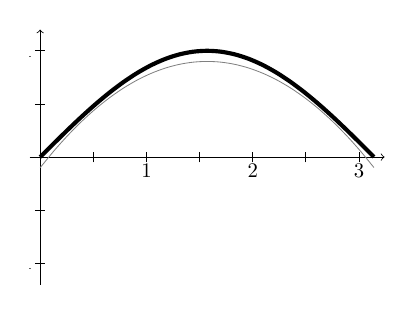}
    \includegraphics[height=3.3cm,keepaspectratio]{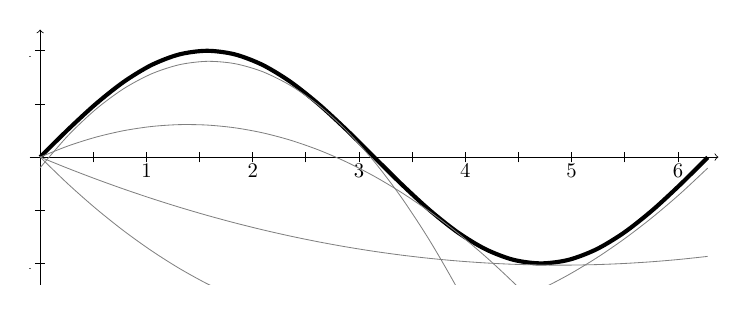}\\
    \includegraphics[height=3.3cm,keepaspectratio]{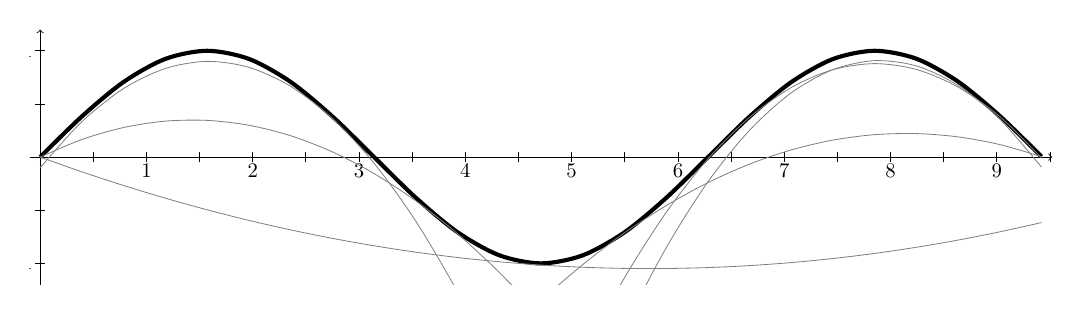}
    \caption{\ac{para} approximations of $\sin$ with $\eps=0.1$ on 
        $[0, \ell\pi]$ for ${l \in \{1, 2, 3\}}$ by 1, 4, 6 parabolas,
        respectively.}
    \label{fig:para-approx-sin-varying-domain}
\end{figure}

\begin{figure}
    \centering
    \includegraphics[height=3.3cm,keepaspectratio]{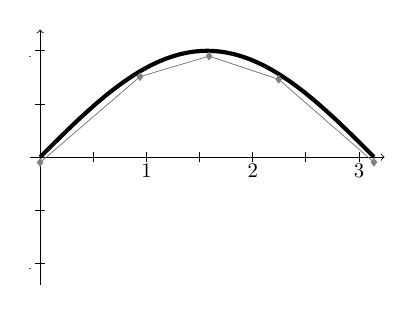}
    \includegraphics[height=3.3cm,keepaspectratio]{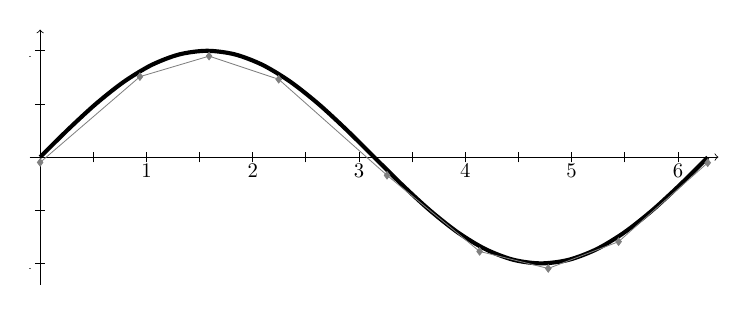}\\
    \includegraphics[height=3.3cm,keepaspectratio]{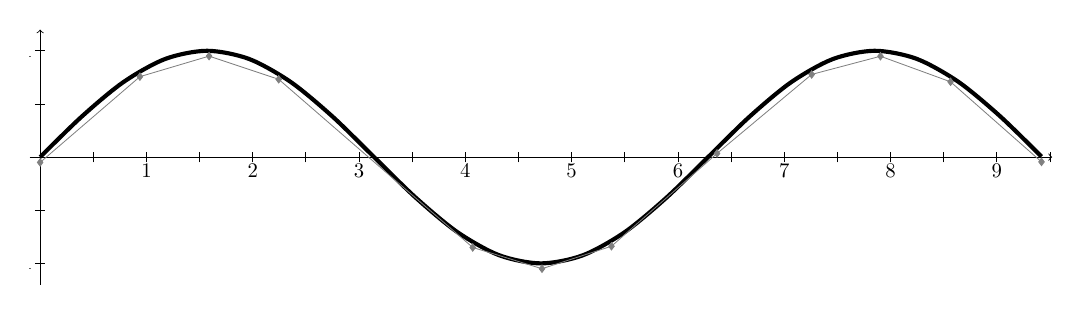}
    \caption{\ac{pwl} approximations of $\sin$ on with $\eps=0.1$ on
        $[0, \ell\pi]$ for ${l \in \{1, 2, 3\}}$ with 4, 8, 12 pieces,
        respectively.}
    \label{fig:pwl-approx-sin-varying-domain}
\end{figure}

Contemplating the approximations on $[0, \pi]$, the $\sin$ function
is reminiscent of a concave parabola and thus requires a lower number of such.
However, for the convex part, \ac{para} needs to respect
the global underestimation~\labelcref{eq:global-under} and 
accordingly a larger domain.
Hence, it has not been obvious to us that a lower number of parabolas
is sufficient even when enlarging the respective domain.
In conclusion, \ac{para} seems to approximate the sine
function with quite a few approximating parabolas,
requiring even less approximates than \ac{pwl}.

In the search for other examples, we deem a function that combines high slope
and an asymptotically linear part as a challenge for \ac{para}.
Hence, we consider the natural logarithm $\ln$ on a variety of domains,
in particular $[e^{-4}, e^{2\ell}]$ for $\ell \in \{-1, 0, 1\}$.
The corresponding illustrations for the \ac{para} and \ac{pwl}
approximations can be found in~\Cref{fig:para-approx-ln-varying-domain}
and~\Cref{fig:pwl-approx-ln-varying-domain}, respectively.

\begin{figure}
    \centering
    \includegraphics[height=3.3cm,keepaspectratio]{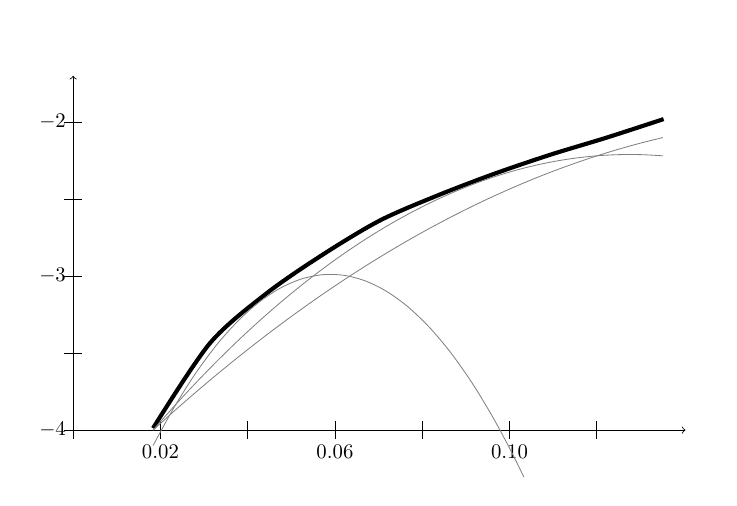}
    \includegraphics[height=3.3cm,keepaspectratio]{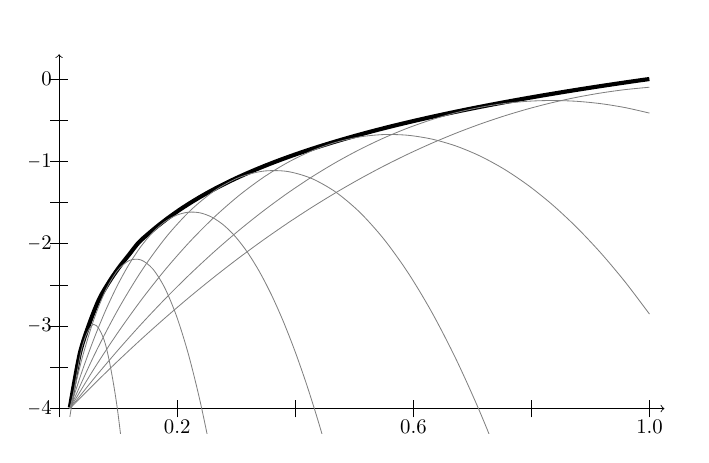}\\
    \includegraphics[height=3.3cm,keepaspectratio]{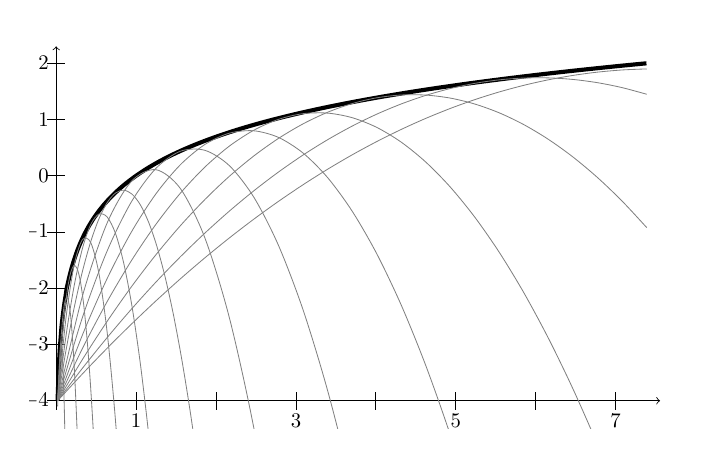}
    \caption{\ac{para} approximations of $\ln$ with $\eps = 0.1$ on 
       $[e^{-4}, e^{2\ell}]$ for ${l \in \{-1, 0, 1\}}$ with 3, 7, 13 parabolas,
       respectively.}
    \label{fig:para-approx-ln-varying-domain}
\end{figure}

\begin{figure}
    \centering
    \includegraphics[height=3.3cm,keepaspectratio]{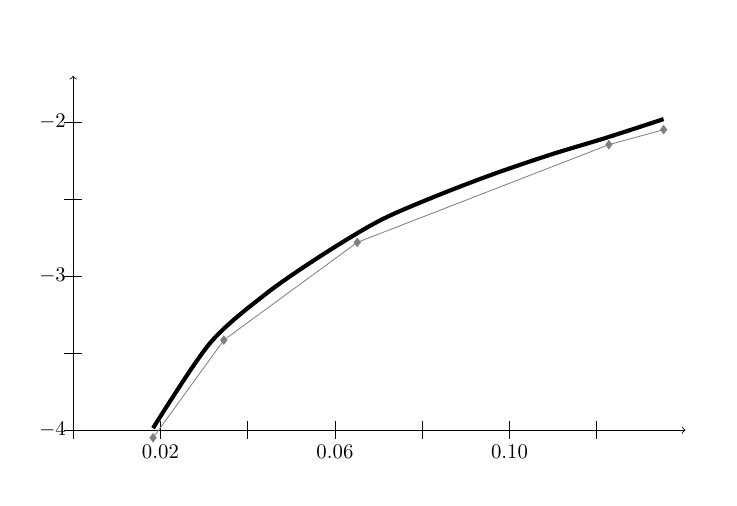}
    \includegraphics[height=3.3cm,keepaspectratio]{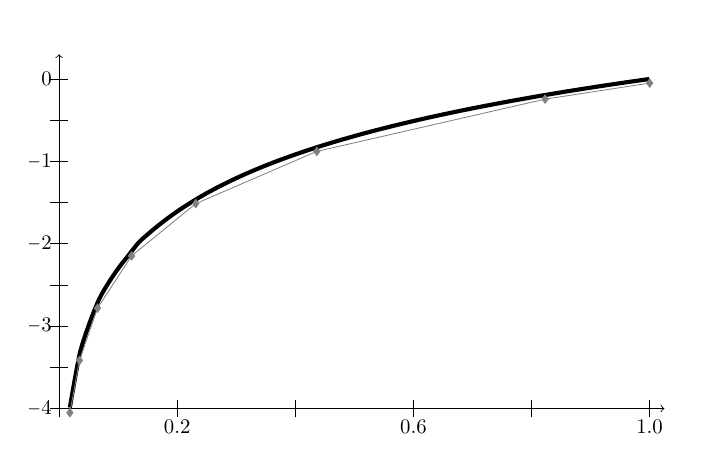}\\
    \includegraphics[height=3.3cm,keepaspectratio]{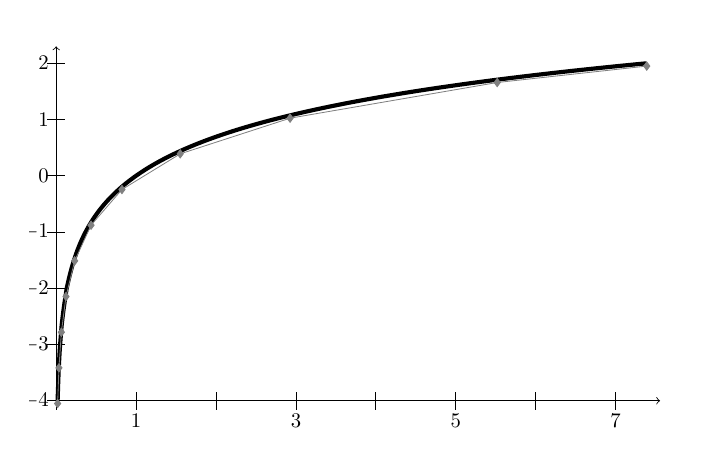}
    \caption{\ac{pwl} approximations of $\ln$ with $\eps = 0.1$ on 
        $[e^{-4}, e^{2\ell}]$ for ${l \in \{-1, 0, 1\}}$ with 4, 7, 10 pieces,
        respectively.}
    \label{fig:pwl-approx-ln-varying-domain}
\end{figure}

As before, \ac{para} requires less approximates for the smallest interval, \ie,
3~parabolas versus 4~linear pieces.
However, when we enlarge the function domain, the number of parabolas
(7 and 13) increasingly exceeds the number of linear pieces (7 and 10).
Indeed, this results from the described form of $\ln$ and the property of
globally underestimating the function with every parabola on the entire domain.

In conclusion, those two examples showcase that there is no general statement
about the necessary number of linear pieces with respect to the parabolas.
Nonetheless, the investigation provides an intuition about it.

\section{Comparison of Relaxation Techniques}
\label{sec:relaxation-comparison}

Using the \ac{para} method from the previous section, 
we are enabled to compute \ac{para} relaxations for general nonlinear,
one-dimensional constraint functions on relatively large domains and
up to small approximation tolerances.
It allows for a comparison with the \ac{pwl} relaxation technique.
In particular, we construct $\eps$-relaxations in the form
of~\labelcref{eq:problem-relax}
for varying values of the approximation tolerance $\eps$ using either technique.
If a solver can compute an optimal solution to such a relaxation,
this solution is also optimal for the 
corresponding problem~\labelcref{eq:problem-approx}.
Hence, we can simply compare the run times achieved with either relaxation,
fixing a tolerance~$\eps$.
The code can again be found in \url{https://github.com/adriangoess/pwl-vs-para}.

\subsection{Test set}
For a potential test set, we consider the MINLPLib~\cite{bussieck2003minlplib}
with 1595 \ac{minlp} instances.
First of all, we aim to leverage the expression tree structure explained
in~\Cref{subsec:math-problem} and thus restrict to the instances available in
\software{OSiL} format~\cite{fourer2010osil}, which are all but one.
Second, we only consider instances that feature at least one exponential,
natural logarithm, sine, or cosine function, since we focus on the their
relaxation.
We remain with an intermediate subset of 235 instances.

Besides this formal filtering we need to respect the characteristics of the
relaxation techniques.
Particularly, either one requires finite bounds for the constraint function
to relax and the size of the resulting relaxations, \ie, the number of linear
pieces or parabolas, depends on the size of the finite domain.
Typically, the function domains in such instances are sterile,
meaning that a part of the domain is not even feasible, considering
function properties or the relationships in the overall problem.
That is, we can tighten the domains by applying a presolving step.
For this, we make use of \software{SCIP 10.0}~\cite{hojny2025scip10}.
It already incorporates the functionality of pure presolving which we apply
with the settings \software{constraints/and/linearize=TRUE} and
\software{presolving/gateextraction/maxrounds=0}.
These ensure to avoid so-called ``and''-constraints in the resulting problems,
which is necessary to write them in respective format.

With respect to format, \software{SCIP} has a reader-functionality for
\software{OSiL}, but no writer-functionality, so we write in \software{GAMS}
format and re-convert to \software{OSiL} using
\software{GAMS 51.3.0}~\cite{gams51}.
Hereby, we ruled out two instances containing the entropy function, since they
could not be converted accordingly.
In addition, we had to rule out five instances where this conversion step failed
and we could not resolve its issue.

Despite the presolving step, however, there remain instances featuring
general nonlinear constraint functions defined on infinite domains or domains
that we consider numerically challenging.
Specifically, the latter may either contain very large absolute values and/or
force the corresponding function to be very close to zero or tremendously large.
This can lead to numerical errors when constructing the relaxations or even
solving such problems.
Hence we restrict to instances that show the domain of an exponential function
to be a subset of $[-5 \cdot 10^5, 10]$ and the domain of a natural logarithm to
 be a subset of $[10^{-7}, 10^7]$.
We remain with a final test set of 174 instances.

\subsection{Computational Setup}

For all instances in our test set, we analyze the occurring general nonlinear,
one-dimensional functions with their presolved domains.
For computing \ac{pwl} approximations, we consider each 
function-domain combination and compute a respective
relaxation as detailed in~\Cref{subsec:pwl-model} using a slightly
manipulated version of the code provided in~\cite{braun2023pwl-t-rex}.
The manipulation consists of relaxing only the general nonlinear functions
and writing the respective problem in \software{GAMS} format.

For \ac{para}, we stick to the paradigm introduced in~\cite{goess2025para},
which advocates the use of \ac{para} approximations in terms of a look-up table.
That is, we cluster the bounds obtained in the presolved instances for the
general nonlinear functions and compute the corresponding parabolas beforehand.
In particular, for the exponential function, we round a lower bound down to the
next multiplicative order of magnitude if its less than -1, 
else to the first digit.
This ensures to cluster lower bounds with similar function value, \ie, small
lower bounds, into the same subintervals, while respecting larger differences
with increasing bound.
To give an example, -132 is rounded to -200, whereas -0.456 to -0.5.
For the upper bound, we proceed similarly for the same reason, but round it up
and consider the
first digit if it is less or equal 0, but the second digit if it is positive.
For the natural logarithm, a reversed procedure is conducted:
For a bound between $10^{\ell - 1}$ and $10^{\ell}$, $l \in \integers$,
it is rounded to the next multiplicative of $10^{\ell -1}$.
This is capped for the lower bound with $\ell \leq 3$ and for the upper bound
with $\ell \geq -1$, where the corresponding multiplicative of $\ell = 3$ and
$\ell = -1$ is considered, respectively.
For the sine and cosine function, the bounds are simply grouped in
multiplicatives of 0.1 for domains with a length of less or equal $4\pi$,
otherwise we concatenate an approximation on an entire period by shifting.

We perform the upper procedure for both relaxation techniques for varying
approximation tolerance $\eps \in \{10^0, 10^{-1}, 10^{-2}, 10^{-3}, 10^{-4}\}$.
Each problem is written in \software{GAMS} format and converted as explained
above.
The resulting problems are solved by leveraging \software{SCIP 10.0} once
again.
All those solving processes are carried out on single nodes with Intel Xeon Gold
6326 ``Ice Lake'' multicore processors with 2.9\unit{\giga\hertz} per core.
The accessible RAM per node is set to 32\unit{\giga\byte}, where \software{SCIP}
is restricted to only use 30\unit{\giga\byte}, avoiding issues with
reading/modeling overhead.
For every problem, we impose a time limit of 4\unit{\hour} equaling
14400\unit{\second}.

Note that there occurred issues in conversion as well as memory issues and
numerical troubles.
If one or both resulting relaxations were affected by one such, we removed
them from further consideration.
This decreased the number of instances for the comparison to 150.

\subsection{``Clash'' of Relaxations}

As laid out before, we can restrict our comparison to sole run time.
For this, we start by comparing the number of faster runs per tolerance $\eps$.
Two runs on the same instance are considered equally fast, if the solution time
varies by less than 5\unit{\second}.
\Cref{fig:faster-solve-comparison} depicts the number of instances for which
the respective relaxation approach could be solved in less time,
for decreasing tolerance from left to right.
The lines indicate trends.

\begin{figure}
    \includegraphics[width=\textwidth]{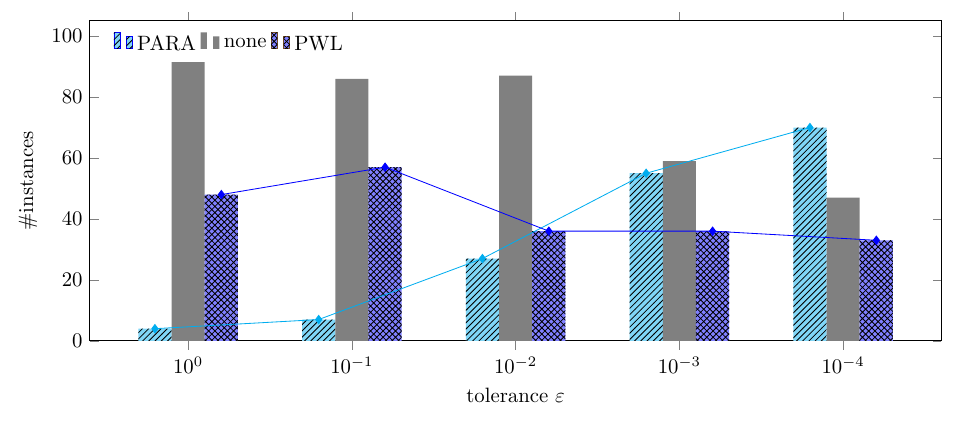}
    \caption{Number (\#) of instances solved faster by \ac{para} or \ac{pwl}
        per tolerance $\eps$.}
    \label{fig:faster-solve-comparison}
\end{figure} 

We observe that the number of instances without a clear winner is comparably
high, decreasing with lower approximation tolerance.
On the one hand, this is an effect by hard instances on which both relaxations
hit the time limit throughout all tolerances~$\eps$.
On the other hand, small-sized instances require quite simple relaxations and
are thus solved in a short amount of time for either relaxation.
The latter effect is reduced with very small tolerance, since it requires a
considerable number of parabolas or linear pieces, causing the solving time
to deviate significantly.

For the comparison of \ac{para} and \ac{pwl}, we see a great favor towards the
\ac{pwl} approach for large tolerances.
In those cases, \ac{pwl} requires none to only a few additional binary variables
along the linear constraints and thus seems to be more tractable than,
potentially non-convex, quadratic constraints.
For smaller tolerance, however, the advantage of not introducing additional
binary variables seems to outweigh the nonlinearity of the parabolic
-- thus quadratic -- constraints.
The internal expressions in \software{SCIP} seem to handle additional
similar-shaped constraints quite well in comparison.

In order to investigate this phenomenon to greater extent, we also evaluate
the results in form of performance profiles using the software
of~\cite{siqueira2016perprof}.
Such performance profiles compare the the run time of the solver on a specific
relaxation to the fastest/shortest run time.
In particular, they depict the fraction of problems solved within a factor
(the performance ratio) of the fastest run available.
For a fair comparison, we also include a base run, \ie, solving the
presolved instances without reformulation or relaxation.
The profiles are showcased for the extreme and intermediate cases
$\eps \in \{10^0,\,10^{-2},\,10^{-4}\}$ in~\Cref{fig:perprof-1e0},
\Cref{fig:perprof-1e-2}, and \Cref{fig:perprof-1e-4}, respectively.

\begin{figure}
    \includegraphics[scale=0.9]{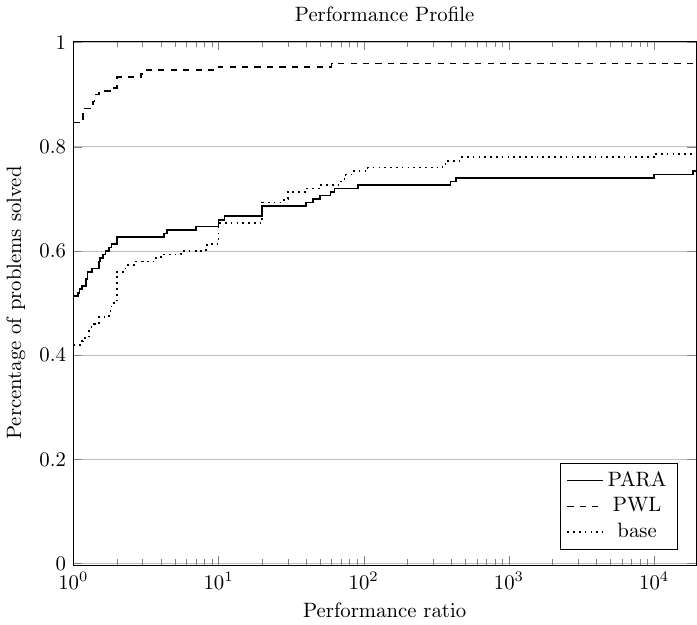}
    \caption{\ac{para} and \ac{pwl} for $\eps = 10^0$ and base runs.}
    \label{fig:perprof-1e0}
\end{figure}

For the highest tolerance $\eps = 10^0$ in \Cref{fig:perprof-1e0},
we can see the aforementioned strength of \ac{pwl}.
Few binary variables and linear constraints are tractable in modern solvers.
Interestingly, the introduction of \ac{para} constraints is only a benefit up
to a certain ratio and already falls behind later on.
Such effects can be caused by approximating already convex constraints,
\eg, an underestimation of the exponential function, by multiple quadratic
constraints.
Note that this was not ruled out a-priorily to allow the comparison against
\ac{pwl}.

\begin{figure}
    \includegraphics[scale=0.9]{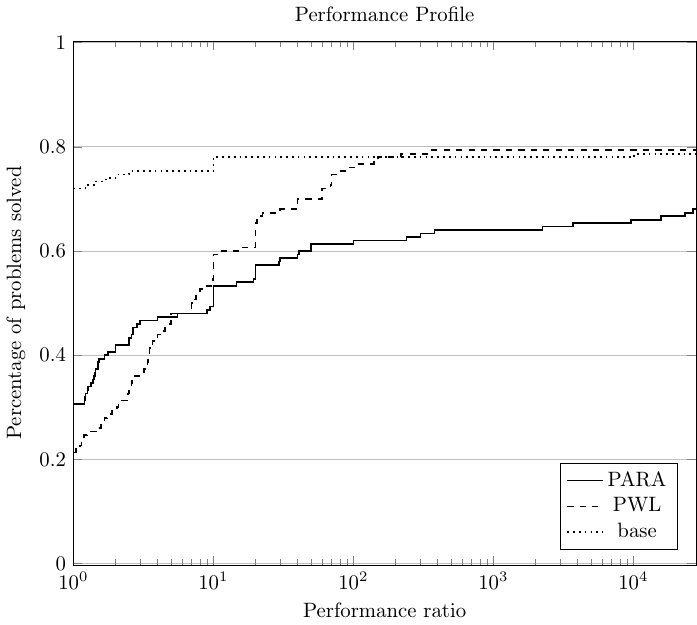}
    \caption{\ac{para} and \ac{pwl} for $\eps = 10^{-2}$ and base runs.}
    \label{fig:perprof-1e-2}
\end{figure}

\begin{figure}
    \includegraphics[scale=0.9]{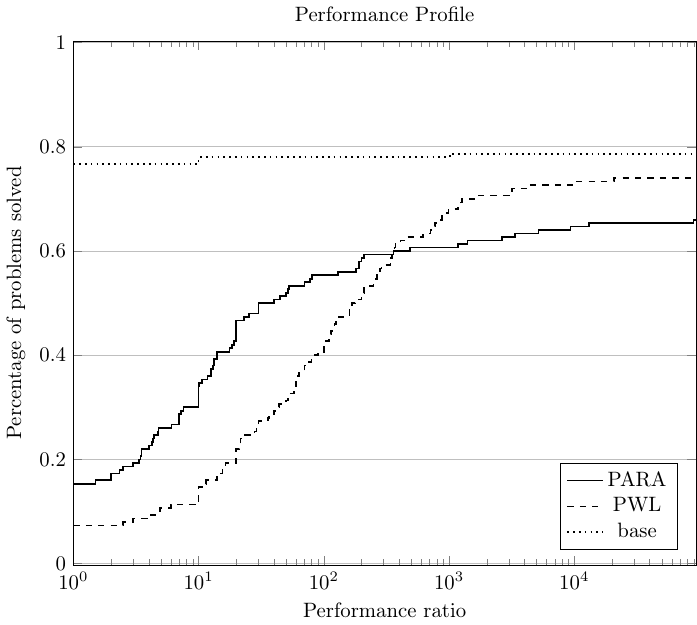}
    \caption{PARA and PWL for $\eps = 10^{-4}$ and base runs.}
    \label{fig:perprof-1e-4}
\end{figure}

For $\eps = 10^{-2}$ and $\eps = 10^{-4}$, we first observe
that the base run is almost always the fastest approach, see
\Cref{fig:perprof-1e-2} and \Cref{fig:perprof-1e-4}, respectively.
This may be a consequence by the lack of adaptivity for the presented
techniques.
In particular, a solver refines the underlying relaxation locally to
avoid large overhead and thus adaptively incorporates additional relaxation
constraints.
This is the default for piecewise linear relaxations in modern solvers, yet
there exists no implementation for the parabolic approach.
Since the latter requires new theoretical foundations and implementations,
we consider it as out of scope for the current paper and take up on it in
\Cref{sec:conclusion}.

Note that the base run is included to provide an insight about
the original difficulty of the instances.
However, the focus of this article truly lies in a comparison of the relaxation
techniques in terms of computational efficacy.

Turning to this comparison of \ac{para} and \ac{pwl} in those cases, we can see
that for $\eps = 10^{-2}$, \ac{para} already shows a better performance for
ratios up to four, before \ac{pwl} takes over.
For $\eps = 10^{-4}$ this is even increased up until around a ratio of 300,
which is quite high.
These observations reinforce the effect described above:
For small tolerances, (non-convex) quadratic constraints can be leveraged more
efficiently than additional binary variables and linear pieces, but only up to a
certain point.

In the light of \Cref{subsubsec:number-para-lins}, instances that feature sine
or cosine functions and not the natural logarithm, maybe be especially
beneficial for the \ac{para} approach.
Such problems occur in the optimization of power networks,
see~\cite{aigner2023acopf-discrete-decisions,bienstock2020acopf,
    frank2016opf-introduction}, for instance.
Hence, we consider a subset containing only instances with at least one sine
or cosine function, which comprises exactly those instances without a natural
logarithm nonlinearity (on our test set).
This results in a subset of 33 instances.

We conduct the same comparison as before and display the number of instances
with faster runs using the respective approach in
\Cref{fig:faster-solve-comparison-no-ln}.
As expected, the previously detected effects are enhanced.
Relaxations by PARA are solved faster for all but one instances for tolerances
$\eps = 10^{-2}$ and below.

\begin{figure}
    \includegraphics[width=\textwidth]{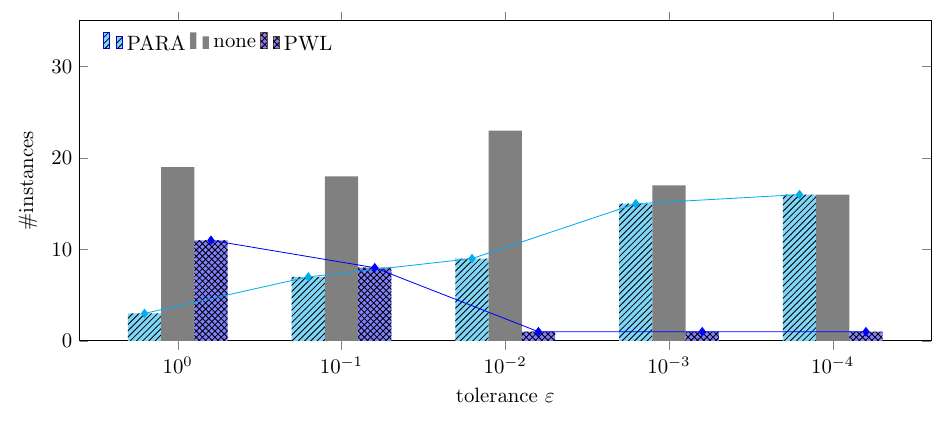}
    \caption{Number (\#) of instances without $\ln$ solved faster by \ac{para}
        or \ac{pwl} per tolerance $\eps$.}
    \label{fig:faster-solve-comparison-no-ln}
\end{figure}

Turning to the performance profile for $\eps = 10^0$ in 
\Cref{fig:perprof-1e0-no-ln}, we notice that both relaxations return instances
being solved faster than the base ones.
Though \ac{pwl} relaxations still require less time than \ac{para}, the effect
is mitigated in comparison to the entire test set.

Now, for $\eps = 10^{-2}$, \ac{para} already consistently outperforms \ac{pwl}
and even is as least as fast as the base runs, not respecting any kind of
adaptivity, see \Cref{fig:perprof-1e-2-no-ln}.
Whereas the exceed in performance between the relaxation techniques
prevails for even smaller tolerance of $\eps = 10^{-4}$ in 
\Cref{fig:perprof-1e-4-no-ln}, we observe that up until a factor of 100,
the base runs are faster.
We dedicate this to the same effects mentioned above.

Suppose we try to solve instances that mainly feature (co)sine functions in
their constraints and require medium to small tolerances.
Then, the above results demonstrate that \ac{para} relaxations
can provide problem formulations that are being solved faster than tackling
the original problem with a stand-alone solver.
This makes \ac{para} relaxations particularly advisable for usage in fields
with power network optimization like AC-OPF
problems~\cite{aigner2023acopf-discrete-decisions,
    bienstock2020acopf,frank2016opf-introduction}.

\begin{figure}
    \includegraphics[scale=0.9]{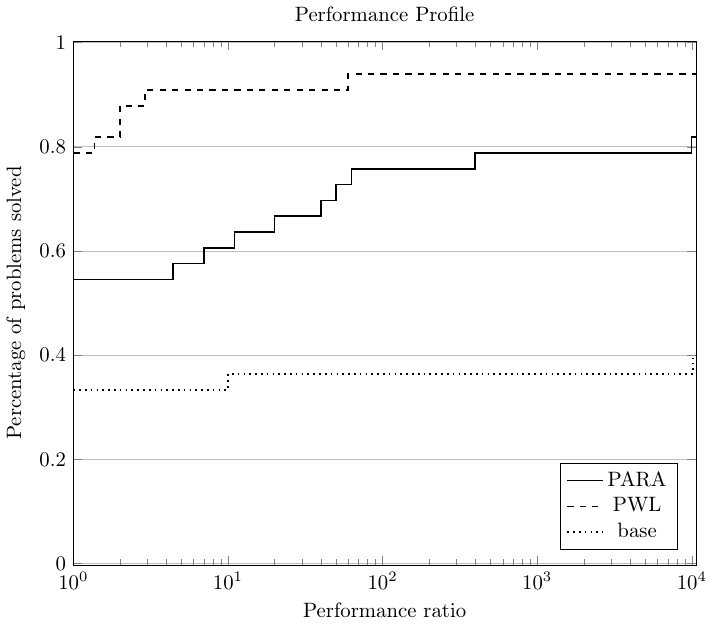}
    \caption{\ac{para} and \ac{pwl} for $\eps = 10^0$ and base runs on
        instances without $\ln$.}
    \label{fig:perprof-1e0-no-ln}
\end{figure}

\begin{figure}
    \includegraphics[scale=0.9]{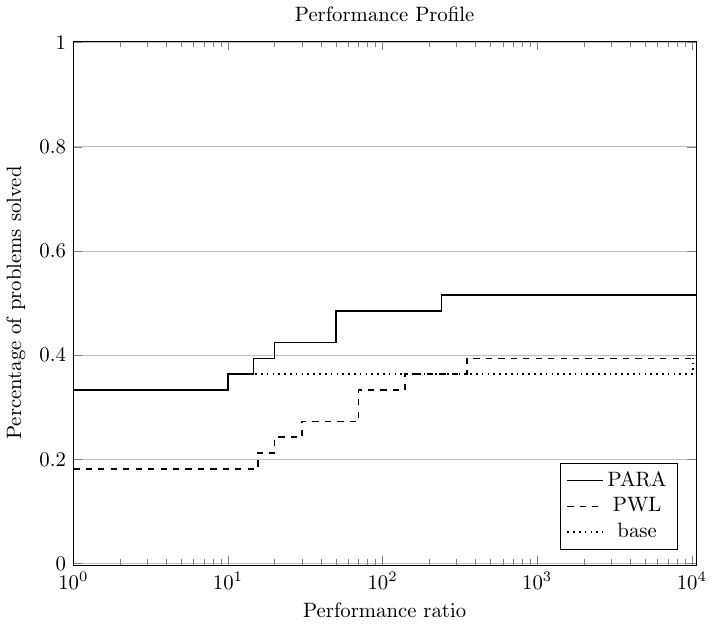}
    \caption{\ac{para} and \ac{pwl} for $\eps = 10^{-2}$ and base runs on
        instances without $\ln$.}
    \label{fig:perprof-1e-2-no-ln}
\end{figure}

\begin{figure}
    \includegraphics[scale=0.9]{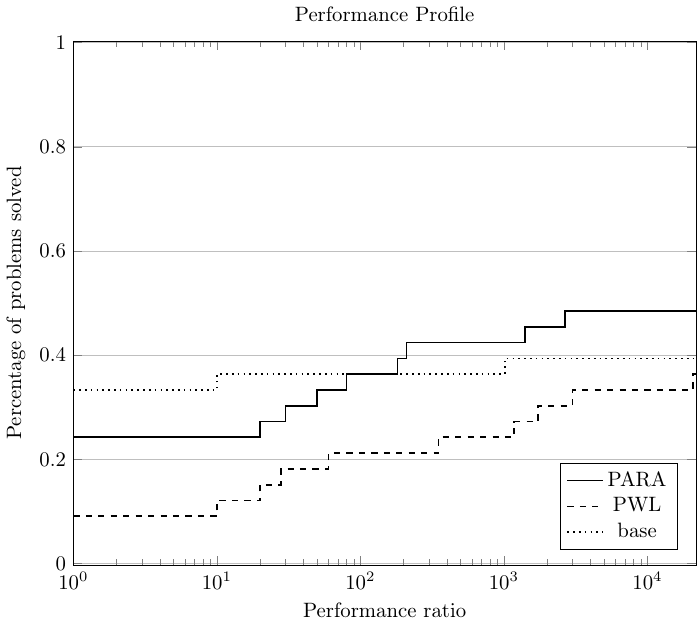}
    \caption{\ac{para} and \ac{pwl} for $\eps = 10^{-4}$ and base runs on
        instances without $\ln$.}
    \label{fig:perprof-1e-4-no-ln}
\end{figure}

Lastly, we'd only like to mention that the relaxation techniques were able to
provide dual bounds that improve on the ones reported and achieved during
base runs.
We report them and give a quick overview in~\Cref{tab:dual-gap-improvements}
in~\Cref{sec:dual-gap-improvements}.

\section{Conclusion}
\label{sec:conclusion}

We have introduced a novel method to compute \ac{para} approximations for
one-dimensional, continuously differentiable functions, which outperformed the
existing method in terms of success in formerly intractable cases and mostly
returned fewer parabolas.
The theoretical basis was complemented with a comparison to \ac{pwl}
with respect to number of linear pieces/parabolas:
Whereas \ac{para} approximations for the sine function require less approximates
than \ac{pwl} ones throughout different interval sizes,
for the natural logarithm \ac{pwl} seems to require less such with increasing
interval size.
This is accounted to the global underestimation property causing troubles
when there exists a steep slope converging asymptotically to zero.

The ``clash'' or comparison of \ac{para} and \ac{pwl} as \ac{minlp} relaxations
was performed on a subset of the well-established MINLPLib.
Hereby, \ac{pwl} relaxations seem to be particularly favorable for wider
tolerances, since they require only a few binary variables and linear
constraints.
This favor shifts towards \ac{para} for tighter tolerances, as the problem sizes
for \ac{pwl} grows too large, whereas many quadratic constraints without
additional variables appears to be quite tractable.
As before, there also exists a preference for trigonometric functions when using
\ac{para}, as they appear in power network optimization.

Although considering a fixed tolerance provides a valid insight for the usage
of either relaxation, practical solvers iteratively refine the given relaxation
until a satisfactory tolerance is reached.
For \ac{para}, this requires further theoretical developments and extended
implementation effort, thus being out of reach for the present article.
However, the adjustment of the presented method and a comparison with
adaptive \ac{pwl} relaxations appears to be a natural next step.

\begin{acronym}[MINLPs]
    \acro{minlp}[MINLP]{mixed-integer nonlinear programming}
    \acro{minlpp}[MINLP]{mixed-integer nonlinear program}
    \acro{minlpps}[MINLPs]{mixed-integer nonlinear programs}
    \acro{miqcp}[MIQCP]{mixed-integer quadratically-constrained programming}
    \acro{mip}[MIP]{mixed-integer linear programming}
    \acro{mipp}[MIP]{mixed-integer linear program}
    \acro{para}[PARA]{global parabolic}
    \acro{paralong}[PARA]{global one-sided parabolic}
    \acro{pwl}[PWL]{piecewise linear}
\end{acronym}
\section*{Acknowledgments}
\label{sec:acknowledgements}

We thank the Deutsche Forschungsgemeinschaft for their
support within project A05 in the
\enquote{Sonderforschungsbereich/Transregio 154 Mathematical
  Modelling, Simulation and Optimization using the Example of Gas
  Networks}, project ID 239904186.
We also thank Robert Burlacu, Alexander Martin, and Kristina Rolsing
for fruitful discussions about this topic and for reviewing an earlier draft.

\printbibliography
\newpage
\ 
\appendix
\section{Proofs for Correctness Theorems in \Cref{subsec:method}}
\label{sec:appendix-proofs}

In this section, we provide the detailed proofs for 
\Cref{thm:correctness-inner-loop} and \Cref{thm:correctness-outer-loop}.
For the sake of completeness, we restate them and provide the proof afterwards.

\subsection{Proof of \Cref{thm:correctness-inner-loop}}
\label{subsec:proof-inner-loop}

\thmone*

\begin{proof}
    First of all, note that the results in \Cref{lem:bounds-a} suggest that
    there exists a solution $(a, b, c)$ for~\labelcref{eq:k-th-problem}
    and~\labelcref{eq:b-c-computation-explicit} if and only if
    there exist finite bounds
    \begin{equation*}
        \lb a \geq \sup_{x \in \mathrm{int}(\locdomain)} A(x)
    \end{equation*} 
    and 
    \begin{equation*}
        \ub a \leq \min\left\{
        \inf_{x \in \domain\setminus\locdomain} A(x),
        \inf_{x \in \mathrm{int}(\locdomain)} B(x)
        \right\}
    \end{equation*}
    with $\lb a \leq \ub a$.
    In other terms, $\lb a > \ub a$ if and only if there exists no such 
    solution.
    
    Assume first that there exists a solution $(a, b, c)$.
    Further, let $l$ be the iteration index of the while-loop, \ie,
    the initialization sets $\ub{a}_0$, $\lb{a}_0 \gets -\infty$, and
    $a_0 \gets \ub{a}_0$
    and every update increments $l$.
    With this notation it holds true that $\lb{a}_l \leq \lb{a}$ and 
    $\ub{a}_l \leq \ub{a}$ for all $l \in \naturals_0$.
    In turn, since $a_l$ is always chosen to be $\ub{a}_l$, 
    it is $a_l \geq \lb{a}$ for all $l \in \naturals_0$ and thus the 
    condition~\labelcref{eq:k-th-problem-under} is always met on $\locdomain$.
    
    Method~\labelcref{alg:subroutine} has two potential outcomes:
    Either it returns a solution in Step~\labelcref{alg:subroutine:step:return}
    or it executes the while-loop infinitely often. 
    In the former case, the solution is valid for~\labelcref{eq:k-th-problem}
    by the definition of the maxima in~\labelcref{eq:inner-loop-maxima}, as well
    as satisfies~\labelcref{eq:b-c-computation-explicit} by construction.

    If Method~\labelcref{alg:subroutine} does not return a solution,
    it produces a sequence of points $(x_l)_{l \in \naturals_0}$ 
    that achieve the maximum of~\labelcref{eq:inner-loop-maximum-outside}
    and~\labelcref{eq:inner-loop-maximum-in-eps} and a sequence of $(a_l)_l$
    from evaluating the respective term in \Cref{lem:bounds-a} at $x_l$.
    
    Note that by using the equalities~\labelcref{eq:b-c-computation-explicit}
    we can write the parabola $p$ as parameterized in the 
    quadratic coefficient $a$.
    Specifically, subtraction and a rearrangement 
    of~\labelcref{eq:b-c-computation-explicit} gives
    \begin{equation*}
        b = \frac{f(\ub t) - f(\lb t)}{\ub t - \lb t} - a(\ub t + \lb t).
    \end{equation*}
    Plugging this back into either of the equalities and 
    rearranging for $c$ leads to
    \begin{equation*}
        c = f(\lb t) - \eps + \lb t \left(a \ub t - \frac{f(\ub t) - 
            f(\lb t)}{\ub t - \lb t}\right)
    \end{equation*}
    and 
    \begin{equation*}
        c = f(\ub t) - \eps + \ub t \left(a \lb t - \frac{f(\ub t) - 
            f(\lb t)}{\ub t - \lb t}\right),
    \end{equation*}
    respectively. 
    The parameterized version of a parabola $p$ is then given as
    \begin{equation}
        \begin{aligned}
            p(x; a) & = a (x^2 - (\ub t + \lb t) x + \lb t \ub t) + 
            (x - \lb t) \left(\frac{f(\ub t) - f(\lb t) }{\ub t - \lb t}\right) 
            + f(\lb t) - \eps \\
            & = a (x^2 - (\ub t + \lb t) x + \lb t \ub t) + 
            (x - \ub t) \left(\frac{f(\ub t) - f(\lb t) }{\ub t - \lb t}\right) 
            + f(\ub t) - \eps.
        \end{aligned}
        \label{eq:parameterized-parabola}
    \end{equation}
    The term in brackets after $a$ can also be written as 
    $q(x) \define (\ub t - x)(\lb t - x)$.
    Note that $q$ is a convex parabola symmetric to its global minimum at
    $(\ub t + \lb t)/2$ and thus attains its maximum as $\lb x$ or $\ub x$.
    We denote the corresponding function values as $q_{\min}$ and $q_{\max}$.
    Further, see that $q(x) > 0$ for $x \in \domain\setminus\locdomain$,
    whereas $q(x) < 0$ for $x \in \mathrm{int}(\locdomain)$.
    Combined with the fact that $a_l$ is non-increasing throughout the method,
    \ie, $a_l \leq a_{l-1}$, this representation allows to derive 
    \begin{equation}
        \label{eq:decreasing-param-p}
        p(x; a_l) \leq p(x; a_{l-1}), \qquad \text{for } 
        x \in \domain\setminus\locdomain,
    \end{equation}
    and
    \begin{equation}
        \label{eq:increasing-param-p}
        p(x; a_l) \geq p(x; a_{l-1}), \qquad \text{for } 
        x \in \mathrm{int}(\locdomain).
    \end{equation}
    
    Now, we aim to show that 
    \begin{enumerate}
        \item[(a)] for all $x \in \domain\setminus\locdomain$, 
        $\lim\limits_{l \to \infty} p(x; a_l) - f(x) \leq 0$, and
        \item[(b)] for all $x \in \locdomain$, 
        $\lim\limits_{l \to \infty} p(x; a_l) -f(x) + \varepsilon \geq 0$.
    \end{enumerate}
    
    We start with case (a).
    Without loss of generality, $\domain\setminus\locdomain \neq \emptyset$,
    otherwise we are done.
    It follows that $\lb x < \lb t$ or $\ub t < \ub x$, thus $q_{\max}  > 0$.
    
    We aim to prove the case by contradiction and thus
    assume there exists $\tilde{x} \in \domain\setminus\locdomain$ such that
    \begin{equation*}
        p(\tilde{x}; a_l) - f(\tilde{x}) > \nu, 
        \qquad \text{for all } l \in \naturals_0,
    \end{equation*}
    for a $\nu > 0$.
    Let $x_l^1$ and $x_l^3$ denote the optimal points 
    for~\labelcref{eq:inner-loop-maximum-outside} and 
    \labelcref{eq:inner-loop-maximum-in-eps} in iteration~$l$, respectively.
    Then, $a_{l+1} = \ub{a}_{l+1} = \min\{A(x_l^1), B(x_l^3)\}$ and
    by \Cref{lem:bounds-a}(a) it follows that $p(x_l^1;a_{l+1}) \leq f(x_l^1)$.
    
    In turn, this implies that $(x_l^1)_{l\in \naturals_0}$ is a sequence of 
    distinct points and, as they represent the optimal point, it is
    \begin{equation*}
        p(x_l^1; a_l) - f(x_l^1) \geq p(\tilde{x}; a_l) - f(\tilde{x}) > \nu,
        \qquad \text{for all } l \in \naturals_0.
    \end{equation*}
    Combining both statements, we have
    \begin{align*}
        \nu & < p(x_l^1; a_l) - f(x_l^1) \leq 
        p(x_l^1; a_l) - f(x_l^1) - (p(x_l^1; a_{l+1}) - f(x_l^1)) \\
        & = p(x_l^1; a_l) - p(x_l^1; a_{l+1}) = (a_l - a_{l+1})q(x_l^1) 
        \leq (a_l - a_{l+1})q_{\max}.
    \end{align*}
    A rearrangement gives $a_{l+1} < a_l - \nu/q_{\max}$, which
    implies $a_l \to -\infty$ for $l \to \infty$, when applying induction.
    This is a contradiction to $-\infty < \lb a \leq \ub a \leq a_l$ 
    for all $l \in \naturals_0$.
    
    For case (b), we can proceed analogously.
    So, $\mathrm{int}(\locdomain) \neq \emptyset$ without loss and 
    we thus know $q_{\min} < 0$.
    We assume there exists $\tilde{x} \in \mathrm{int}(\locdomain)$ such that
    \begin{equation*}
        p(\tilde{x}; a_l) - f(\tilde{x}) + \eps < -\nu, 
        \qquad \text{for all } l \in \naturals_0,
    \end{equation*}
    for a $\nu > 0$.
    By~\Cref{lem:bounds-a}(c) and the statements from before
    we can follow that $p(x_l^3;a_{l+1}) \geq f(x_l^3) - \eps$ for
    $l \in \naturals_0$.
    In addition, we derive
    \begin{equation*}
        f(x_l^3) - p(x_l^3; a_l) - \eps 
        \geq f(\tilde{x}) - p(\tilde{x}; a_l) - \eps > -\nu, 
        \qquad \text{for all } l \in \naturals_0.
    \end{equation*}
    A combination of the statements leads to
    \begin{equation*}
        -\nu < p(x_l^3; a_{l+1}) - p(x_l^3; a_l) \leq (a_{l+1} - a_l)q_{\min},
    \end{equation*}
    and thus $a_{l+1} < a_l - \nu/q_{\min}$.
    This implies the same contradiction as before and we have shown case (b).
    In conclusion, if there exists a feasible solution $(a, b, c)$,
    Method~\labelcref{alg:subroutine} converges to it.
    
    In the light of~\Cref{lem:bounds-a} the above results also imply that
    $\ub{a}_l \to \ub{a}$ for $l \to \infty$.
    If there exists no solution $(a, b, c)$, the introductory statements imply
    that $\lb a > \ub a$.
    That is, there exists some $L \in \naturals$ such that $\lb a > \ub{a}_l$
    for all $l \geq L$ and, in turn, there exists $\tilde{x} \in \locdomain$
    such that $p(\tilde{x}; \ub{a}_l) > f(\tilde{x})$.
    
    For the update of $\ub{a}_l$, note that there exist
    $x_{l-1}^1$ and $x_{l-1}^3$ that cause it.
    Then, either $p(x_{l-1}^1; \ub{a}_l) = f(x_{l-1}^1)$ or
    $p(x_{l-1}^3; \ub{a}_l) = f(x_{l-1}^3) - \eps$.
    In conclusion, to mitigate the violation in~$\tilde{x}$, there needs to be
    a reduction in parameter $\lb{a}_l$ in the next iteration,
    whereas this directly causes the respective equality to be violated again.
    Hence, the method has found a pair $\lb{a}_l < \ub{a}$ 
    and exits the while-loop.
\end{proof}

\subsection{Proof of \Cref{thm:correctness-outer-loop}}
\label{subsec:proof-outer-loop}

\thmtwo*

\begin{proof}
    Consider a local interval $\domain_k = \locdomain = [\lb t, \ub t]$
    with $\abs{\locdomain} = \Delta$.
    We show that there exists a particular $p$ such that
    \begin{itemize}
        \item[a)] $p(x) \geq f(x) - \eps$ on $\locdomain$,
        \item[b)] $p(x) \leq f(x)$ on $\locdomain$, and
        \item[c)] $p(x) \leq f(x)$ on $\domain\setminus\locdomain$.
    \end{itemize}
    For case a) we make use of~\Cref{lem:bounds-a}, whereas for
    cases b) and c) the approach is straightforward.
    Throughout the following section denote the mid point of $\locdomain$
    as $\tilde{t} = (\ub t + \lb t)/2$.
    
    Case a).
    Without loss of generality, consider $x \in \mathrm{int}(\locdomain)$.
    This implies ${(x - \lb t)(x - \ub t) < 0}$.
    
    If we can show that $a_k = -4L/\Delta \leq B(x)$, 
    we have proven case a) by \Cref{lem:bounds-a}.
    We make a distinction and consider $x \in (\lb t, \tilde{t}]$ first.
    Then, the distance of $\ub t$ to $x$ is at most $\Delta/2$.
    Formally, this is $\ub t - x \geq \Delta/2$ or equivalently 
    $x - \ub t \leq -\Delta/2$.
    
    By the Lipschitz continuity of $f$, it is
    \begin{equation*}
        -L\abs{x - y} \leq -\abs{f(x) - f(y)} \leq f(x) - f(y) \leq 
        \abs{f(x) - f(y)} \leq L\abs{x - y},
    \end{equation*}
    for $y \in \locdomain$.
    Combining it with ${(x - \lb t)(x - \ub t) < 0}$ and respecting 
    ${x - \ub t < 0}$ we can now estimate
    \begin{align*}
        B(x) &= \frac{f(x) - f(\lb t)}{(x - \lb t)(x - \ub t)} 
        - \frac{f(\ub t) - f(\lb t)}{(\ub t - \lb t)(x - \ub t)} \\
        & \geq \frac{L\abs{x - \lb t}}{(x - \lb t)(x - \ub t)}
        - \frac{-L\abs{\ub t - \lb t}}{(\ub t - \lb t)(x - \ub t)} \\
        & = \frac{2L}{x - \ub t} \geq - \frac{4L}{\Delta} = a_k,
    \end{align*}
    This was to show and concludes the first part of case a).
    
    It remains to consider $x \in [\tilde{t}, \ub t)$.
    Analogously, we derive $\lb t - x \leq -\Delta/2$.
    Now, we recall that $B(x)$ has an equivalent reformulation,
    as stated in~\Cref{rem:initial-a-conditions}, that says
    \begin{equation*}
        B(x) = \frac{f(x) - f(\ub t)}{(x - \lb t)(x - \ub t)} 
        - \frac{f(\ub t) - f(\lb t)}{(\ub t - \lb t)(x - \lb t)}.
    \end{equation*}
    Following the previous strategy, we get
    \begin{align*}
        B(x) &\geq \frac{L \abs{x - \ub t}}{(x - \lb t)(x - \ub t)}
        - \frac{L\abs{\ub t - \lb t}}{(\ub t - \lb t)(x - \lb t)} \\
        & \geq -\frac{L}{x - \lb t} - \frac{L}{x - \lb t} 
        \geq -\frac{4L}{\Delta}= a_k.
    \end{align*}
    Note that we have used $\abs{x - \ub t} = -(x - \ub t)$.
    This concludes case a).
    
    Case b).
    We make the same distinction as in case a) and 
    consider $x \in (\lb t, \tilde{t}]$ first.
    Thus $x - \lb t \leq \Delta/2$ and $\ub t - x \leq \Delta$.
    By the Lipschitz continuity of $f$, we can derive
    \begin{equation*}
        f(\lb t) - f(x) \leq \abs{f(\lb t) - f(x)} \leq L\abs{\lb t - x} 
        \leq \frac12 L \Delta,
    \end{equation*}
    or equivalently $f(\lb t) \leq f(x) + \frac12 L \Delta$.
    Using the parameterized version of $p$ as stated
    in~\labelcref{eq:parameterized-parabola}, this gives
    \begin{align*}
        p(x; a_k) &= -\frac{4L}{\Delta}(x - \lb t)(x - \ub t) + 
        (x - \lb t)\frac{f(\ub t) - f(\lb t)}{\ub t - \lb t} + f(\lb t) - \eps\\
        & \leq (x - \lb t)\left(L + \frac{4L}{\Delta}( \ub t - x)\right)
        + f(\lb t) - \eps \\
        & \leq \frac12 \Delta \left(L + \frac{4L}{\Delta} \Delta\right) 
        + \frac{1}{2}L\Delta + f(x) - \eps \\
        & \leq \frac52 L\Delta + \frac12 L \Delta + f(x) - \eps 
        = f(x) + 3 L\Delta - \eps \leq f(x),
    \end{align*}
    where the last inequality follows from the assumption on~$\Delta$.
    
    Considering $x \in [\tilde{t}, \ub t)$, we can make an analogous derivation.
    That is, we have $\ub t - x \leq \Delta/2$ and $x - \lb t \leq \Delta$,
    as well as $f(\ub t) \leq f(x) + \frac12 \Delta$.
    Combining this, we get
    \begin{align*}
        p(x; a_k) &= -\frac{4L}{\Delta}(x - \lb t)(x - \ub t) + 
        (x - \ub t)\frac{f(\ub t) - f(\lb t)}{\ub t - \lb t} + f(\ub t) - \eps\\
        & \leq (\ub t - x)\left(L + \frac{4L}{\Delta}(x - \lb t)\right) 
        + f(\ub t) - \eps \\
        & \leq \frac12\Delta \left(L + \frac{4L}{\Delta} \Delta\right) 
        + \frac{1}{2}L\Delta + f(x) - \eps \\
        & \leq \frac52 L\Delta + \frac12 L \Delta + f(x) - \eps
        \leq f(x) + 3L \Delta - \eps = f(x),
    \end{align*}
    taking the other representation of parameterized $p$
    in~\labelcref{eq:parameterized-parabola} into account.
    Note that the first step uses
    $f(\ub t) - f(\lb t) \geq -L \abs{\ub t - \lb t}$, but the sign is turned
    due to $x - \ub t \leq 0$.
    This concludes case b).
    
    Case c).
    First, we consider $x \in \domain$ with $x \geq \ub t$.
    Since $p(\ub t) = f(\ub t) - \eps < f(\ub t)$, 
    it is sufficient to show that $p' \leq f'$ for all such $x$.
    In other terms, $p$ strictly underestimates $f$ at $\ub t$ and 
    decreases faster than $f$ from $\ub t$ on.
    
    Let's investigate the derivative of $p$ at $\ub t$, which is
    \begin{equation*}
        p'(\ub t; a_k) = -\frac{4L}{\Delta}(2 \ub t - (\ub t + \lb t))
        + \frac{f(\ub t ) - f(\lb t)}{\ub t - \lb t} 
        \leq -4L \frac{\ub t - \lb t}{\Delta} +  L = -3L.
    \end{equation*}
    Since $a_k < 0$, $p$ is strictly concave and thus
    \begin{equation*}
        p'(x; a_k) \leq -3L < -L \leq f'(x),
    \end{equation*}
    for all $x \geq \ub t$.
    
    Second, consider $x \in \domain$ with $x \leq \lb t$.
    Analogously to above we need to show $p' \geq f'$ for all such $x$.
    Inspecting the derivative at $\lb t$ gives
    \begin{equation*}
        p'(\lb t; a_k) = -\frac{4L}{\Delta}(2 \lb t - (\ub t + \lb t))
        + \frac{f(\ub t) - f(\lb t)}{\ub t - \lb t} 
        \geq 4L \frac{\ub t - \lb t}{\Delta} -  L = 3L.
    \end{equation*}
    Again, due to strict concavity, we have
    \begin{equation*}
        p(x; a_k) \geq 3L > L \geq f'(x),
    \end{equation*}
    for all $x \leq \lb t$.
    This concludes case c).
    
\end{proof}

\newpage
\section{Dual Gap Improvements}
\label{sec:dual-gap-improvements}

As mentioned in~\Cref{sec:relaxation-comparison}, there exist instances for
which one relaxation method resulted in improved dual bounds.
We compare to the dual bounds achieved during the base run and the ones
reported on the MINLPLib~\cite{bussieck2003minlplib} webpage.

In~\Cref{tab:dual-gap-improvements} we list all instances, where at least one
improvement was detected and give the respective relaxation type and tolerance.
If the same type has achieved the same dual bound for several tolerances $\eps$,
we report the result for its largest value.
The column ``sense'' reports on the objective sense in order to classify the
dual bounds correctly.
The gaps are computed with respect to the reported primal value on the webpage
and -- for the relaxations -- the dual value achieved when solving a relaxation.
The term ``rep.'' stands for ``reported'' and refers to the value from the
reference value from the webpage.

We limit our interpretation to the ``lnts'' and the ``polygon'' instances, as
they show the most significant improvements.
For all those, \ac{para} was able to construct relaxation that lead to dual
bounds which can reduce the original gap to nearly zero or by a factor of
up to~10, respectively.
Once again, this strengthens our observation that \ac{para} seems to be quite
efficient when tackling (co)sine constraint functions, since all those
are nonlinear programs with quite an extensive number of such constraints.

\begin{table}
\caption{Dual and Gap Improvements by Relaxations}
\label{tab:dual-gap-improvements}
\resizebox{\textwidth}{!}{%
\begin{tabular}{lllcrrrrr}
\toprule
instance & sense & type & $\eps$ & time [\unit{\second}] & dual & rep.\,dual & gap [\%] & rep.\,gap [\%] \\
\midrule
arki0017 & min & \ac{pwl} & $10^{0}$ & limit & -9.8e2 & -1.4e3 & 70.35 & 101.24 \\
lnts100 & min & \ac{para} & $10^{-4}$ & 161 & 5.5e-1 & 5.0e-1 & 0.00 & 0.94 \\
lnts200 & min & \ac{para} & $10^{-4}$ & 505 & 5.5e-1 & 5.0e-1 & 0.00 & 0.98 \\
lnts400 & min & \ac{para} & $10^{-3}$ & 576 & 5.5e-1 & 4.5e-1 & 0.01 & 1.89 \\
lnts50 & min & \ac{para} & $10^{-4}$ & 39 & 5.5e-1 & 5.1e-1 & 0.00 & 0.87 \\
polygon100 & min & \ac{para} & $10^{-3}$ & limit & -3.1e0 & -2.8e1 & 29.78 & 352.32 \\
polygon25 & min & \ac{para} & $10^{-3}$ & limit & -2.2e0 & -4.3e0 & 17.89 & 44.80 \\
polygon50 & min & \ac{para} & $10^{-4}$ & limit & -3.0e0 & -1.1e1 & 28.47 & 133.76 \\
polygon75 & min & \ac{para} & $10^{-4}$ & limit & -3.1e0 & -1.9e1 & 29.17 & 234.13 \\
powerflow0039p & min & \ac{para} & $10^{-4}$ & limit & 4.0e2 & 4.0e2 & 9.90 & 9.90 \\
powerflow0300p & min & \ac{para} & $10^{-3}$ & limit & 1.2e4 & 0 & -- & -- \\
procurement1large & max & \ac{pwl} & $10^{-2}$ & limit & 7.1e3 & 1.8e4 & 28.66 & 36.97 \\
procurement1mot & max & \ac{pwl} & $10^{-1}$ & limit & 4.6e2 & 1.4e3 & 25.74 & 36.89 \\
\bottomrule
\end{tabular}%
}
\end{table}

\end{document}